\providecommand*{\diff}%
{\@ifnextchar^{\DIfF}{\DIfF^{}}}
\def\DIfF^#1{%
\mathop{\mathrm{\mathstrut d}}%
\nolimits^{#1}\gobblespace}
\def\gobblespace{%
\futurelet\diffarg\opspace}
\def\opspace{%
\let\DiffSpace\!%
\ifx\diffarg(%
\let\DiffSpace\relax
\else
\ifx\diffarg[%
\let\DiffSpace\relax
\else
\ifx\diffarg\{%
\let\DiffSpace\relax
\fi\fi\fi\DiffSpace}
\providecommand\given{} 
\newcommand\SetSymbol[1][]{
   \nonscript\,#1\vert \allowbreak \nonscript\,\mathopen{}}
\DeclarePairedDelimiterX\Set[1]{\lbrace}{\rbrace}%
 { \renewcommand\given{\SetSymbol[\delimsize]} #1 }
\theoremstyle{plain}
\newcommand{\eref}[1]{Eq.\,(\ref{#1})}
\newcommand{\fref}[1]{Fig.\,\ref{#1}}
\newcommand{\sep}{\,\,\,\,}
\newcommand{\sepand}{\,\,\text{and}\,\,}
\newcommand{\tp}[1]{#1\raisebox{1.15ex}{$\scriptscriptstyle\!\text{T}$}}
\newcommand{\normi}[1]{{\left\lVert#1\right\rVert}_{\infty}}
\newcommand{\tmadd}{\oplus}
\newcommand{\tmmul}{\odot}
\newcommand{\tmr}{\mathbb{R}_{\text{max}}}
\newcommand{\cone}{\mathcal{C}}
\newcommand{\defeq}{\vcentcolon=}
\DeclareMathOperator*{\argmax}{argmax}
\DeclareMathOperator*{\argmin}{argmin}
\renewcommand{\qed}{\hfill\blacksquare}
\newtheorem{theorem}{Theorem}[section]
\newtheorem{lemma}[theorem]{Lemma}
\renewenvironment{proof} {\par{\it Proof.} \ignorespaces} {\par\medskip}
\theoremstyle{definition}
\newtheorem{definition}[theorem]{Definition}
\theoremstyle{remark}
\numberwithin{equation}{section}
\title{Extreme rays of the $\ell^\infty$-nearest ultrametric tropical polytope}
\author{Luyan Yu\footnote{luyan.yu@utexas.edu \\
\indent \, 2010 Mathematics Subject Classification. 92B15, 14T05\\
\indent \, Keywords: Tropical Geometry, Extreme Rays, Phylogenetics.}}
\affil{Department of Physics, University of Texas at Austin}
\date{}                     
\begin{document}
\maketitle
\thispagestyle{fancy}
\fancyhead{}
\lhead{}
\chead{}
\rhead{}
\lfoot{}
\cfoot{\thepage}
\rfoot{}
\renewcommand{\headrulewidth}{0pt}
\renewcommand{\footrulewidth}{0pt}

\begin{abstract}
The set of ultrametrics on $[n]$ nodes that are $\ell^\infty$-nearest to a given dissimilarity map forms a $(\max,+)$ tropical polytope. Previous work of Bernstein has given a superset of the set containing all the phylogenetic trees that are extreme rays of this polytope. In this paper, we show that Bernstein's necessary condition of tropical extreme rays is sufficient only for $n=3$ but not for $n\geq 4$. Our proof relies on the exterior description of this tropical polytope, together with the tangent hypergraph techniques for extremality characterization. The sufficiency of the case $n=3$ is proved by explicitly finding all extreme rays through the exterior description. Meanwhile, an inductive construction of counterexamples is given to show the insufficiency for $n\geq 4$.
\end{abstract}

\section{Introduction}
Phylogenetic trees are used as a compact way of representing the evolutionary history of systems such as biological species. In the case of biological evolution, the leaves of a tree are known species and the internal nodes are hypothetical ancestors. The higher up in the tree the common ancestor of two species is, the further evolutionary relationships these two species have. A tree with edge lengths or internal node weights specified induces a metric on the leaves, which measures the discrepancy among entities under consideration. An ultrametric $\delta$ is a metric satisfying the additional strengthened triangle inequality $\delta_{ik} \leq \max(\delta_{ij},\delta_{jk})$ for all $i,j \text{ and } k$, or equivalently, $\max(\delta_{ij},\delta_{jk},\delta_{ik})$ achieved at least twice. An ultrametric encodes the topology of the underlying phylogenetic tree and one can recover it when needed.

In practice, one is usually given dissimilarity maps that measure the pairwise discrepancy among entities. Most of the times such dissimilarity maps are not exact but presumably approximate to some ultrametrics. The reconstruction of the ultrametrics from dissimilarity maps is of great interest across many scientific areas \cite{takezaki1996genetic, saitou1987neighbor, strimmer1996quartet}. Although many choices of distance measure between two dissimilarity maps exist, the $\ell^\infty$-distance becomes a natural and preferable choice since it blends well into the $(\max,+)$ tropical algebra together with the defining property of ultrametrics. Recent advancements also imply deeper relations between phylogenetic trees and tropical geometry: \cite{ardila2006bergman} established the homeomorphism between Bergman fan of the graphical matroid of the complete graph to the space of phylogenetic trees; \cite{lin2018tropical} gave the space of phylogenetic trees a statistical treatment framework based on tropical geometry and \cite{manon2012phylogenetic} discussed the relationship between the space of weighted phylogenetic trees and the tropical varieties of flag variety. Also, an interesting application of the tropical principal component analysis to phylogenetic trees can be found in \cite{yoshida2017}.

It was shown by Bernstein in \cite{bernstein2017} that the set of ultrametrics that are $\ell^\infty$-nearest to a given dissimilarity map $d$ forms a tropical polytope. A tropical polytope admits two equivalent descriptions --- by tropical halfspaces (exterior description) and by extreme rays (interior description). In pursuit of a direct method to find extreme rays, Bernstein proposed a procedure as well as a criterion that can generate a set of ultrametrics $\mathcal{B}_n(d)$ and showed that $\mathcal{B}_n(d) \supseteq \mathcal{E}_n(d)$, where $\mathcal{E}_n(d)$ is the set of extreme rays. But it was not clear if $\mathcal{B}_n(d) = \mathcal{E}_n(d)$. In this paper, we prove the following theorem.
\begin{theorem}\label{maintheorem}
The necessary extremality characterization by Bernstein is sufficient only when $n=3$, i.e.,
$\mathcal{B}_3(d) = \mathcal{E}_3(d)$;\, $\mathcal{B}_n(d) \supsetneq \mathcal{E}_n(d),\, \forall n\geq 4$.
\end{theorem}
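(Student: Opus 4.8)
The plan is to prove the two halves of the theorem separately, using the exterior description of the tropical polytope together with the tangent-hypergraph criterion for extremality (Develin--Yu / Gaubert--Katz style) that the excerpt promises to invoke. For the positive direction, $\mathcal{B}_3(d)=\mathcal{E}_3(d)$, I would first write down the nearest-ultrametric polytope for $n=3$ explicitly. There are only three coordinates $\delta_{12},\delta_{13},\delta_{23}$, and an ultrametric on three leaves is precisely a point where the maximum of these three is attained at least twice; the $\ell^\infty$-nearest ones to $d$ form a tropical polytope whose exterior description consists of only a handful of tropical halfspaces. Since $\mathcal{B}_3(d)\supseteq\mathcal{E}_3(d)$ already holds by Bernstein's result, it suffices to show every element of $\mathcal{B}_3(d)$ is in fact extreme. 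I would enumerate the (finitely many, up to the symmetry $S_3$ on leaves and the sign/ordering cases coming from which pairs of $d_{ij}$ are closest) elements that Bernstein's procedure produces, and for each one verify extremality directly: either exhibit it as a vertex of the ordinary polytope obtained by fixing a cell of the tropical complex, or run the tangent-hypergraph test and check that the relevant hypergraph is connected. The case analysis is small enough that this is bookkeeping rather than insight.

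**For the negative direction** I would give an explicit counterexample at $n=4$ and then an inductive lifting to all $n\geq 4$. The natural strategy is to produce a dissimilarity map $d$ on $[4]$ and an ultrametric $\delta\in\mathcal{B}_4(d)$ that satisfies Bernstein's criterion but is \emph{not} an extreme ray — i.e., $\delta$ is a genuine tropical convex combination $\delta=\bigoplus_k \lambda_k\tmmul \delta^{(k)}$ of other points of the polytope, none of which is a tropical scalar multiple of $\delta$. Concretely I would pick $d$ so that the nearest-ultrametric polytope has a face on which Bernstein's tree-based necessary condition is blind to a degeneracy; the tangent hypergraph at $\delta$ will then be disconnected (so $\delta$ fails the sufficient-for-extremality test), and I would confirm this is not a false negative by writing $\delta$ explicitly as such a combination. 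For the induction, given a counterexample $(d,\delta)$ on $[n]$ I would extend $d$ to $[n+1]$ by attaching leaf $n+1$ "far above" (all new entries $d_{i,n+1}$ equal to a large common value $M$ exceeding every old entry), so that in every nearest ultrametric leaf $n+1$ splits off first; the nearest-ultrametric polytope on $[n+1]$ then contains a copy of the one on $[n]$ as a tropical face, Bernstein's set $\mathcal{B}$ restricts compatibly, and non-extremality of $\delta$ in the smaller polytope forces non-extremality of its lift. One must check that the cone over a non-extreme point of a tropical face is non-extreme in the whole polytope — true because a tropical convex combination inside the face is a tropical convex combination in the ambient polytope.

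**The main obstacle** I anticipate is the construction of the base counterexample at $n=4$: it requires choosing $d$ so that (i) the combinatorial type of the nearest-ultrametric polytope is understood well enough to read off its tropical faces and apply the tangent-hypergraph criterion, (ii) Bernstein's procedure genuinely outputs the bad point $\delta$ (so one must trace through his tree-enumeration/criterion on this $d$), and (iii) one has an honest tropical-convex-combination certificate for non-extremality. Getting all three to hold simultaneously is the crux; once a clean $n=4$ example is in hand, the inductive step is essentially formal, resting only on the behaviour of the polytope and of $\mathcal{B}_n$ under the "add a far leaf" operation, which should follow from the exterior description. A secondary technical point is verifying that the tangent-hypergraph test is genuinely applicable here — i.e., that it characterizes extremality of rays of this particular polytope — which I would either cite from the structural results assumed earlier or re-derive from the fact that a tropical polytope is a finite union of ordinary polytopes (cells of the type decomposition) and a ray is extreme iff it is a common vertex of all cells containing it.
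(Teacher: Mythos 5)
Your strategy coincides with the paper's: for $n=3$, reduce by relabeling to $d_{12}\leq d_{13}\leq d_{23}$, list the finitely many candidates produced by Bernstein's sliding procedure in each degeneracy case, and certify each one extreme via the tangent directed hypergraph built from the exterior description; for $n\geq 4$, exhibit an explicit $n=4$ counterexample and lift it inductively by attaching leaf $n+1$ far above the root (the paper sets $d'_{i,n+1}=r+q+\epsilon$ and $\delta'_{i,n+1}=r+\epsilon$). Two points of detail where your plan should be tightened. First, the extremality test is not connectivity of the tangent hypergraph but the existence of a \emph{greatest} strongly connected component in the reachability order (Theorem \ref{thm:all13}); this is strictly stronger than connectivity of the underlying graph, and using connectivity alone would wrongly certify some non-extreme points. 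Second, for the inductive step the paper argues on the tangent hypergraph itself: the $n$ new coordinates form an extra SCC whose only hyperarcs into the old graph point at the homogenization node $\xi$, so this SCC can never be the greatest element and the existence of a greatest SCC is unchanged; this yields equivalence of extremality in both directions (Lemma \ref{lma:addnode}), whereas your face/convex-combination argument gives only the one direction you need and still requires checking that the lifted combination reproduces the new coordinates $\delta'_{i,n+1}$. You also correctly flag, but must actually prove, that the lift stays in $\mathcal{B}_{n+1}(d')$: the paper does this by observing that the new root slides all the way down to $r+\epsilon$, becomes immobile, and thereafter the sliding procedure reproduces the $n$-leaf procedure inside the subtree, so the lifted tree is generated and has at most one mobile node.

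The genuine gap is the base case. The entire negative half of the theorem rests on producing a concrete dissimilarity map $d$ on four leaves together with an ultrametric $\delta\in\mathcal{B}_4(d)\setminus\mathcal{E}_4(d)$, and your proposal identifies this as the crux without resolving it. The paper supplies the instance \eqref{cntex2:eq1}, for which the exterior-to-interior algorithm returns exactly $8$ extreme rays while Bernstein's procedure outputs $10$ trees passing the criterion; the two surplus trees are certified non-extreme by the absence of a greatest SCC in their tangent hypergraphs. Without such an explicit instance (or a structural existence argument in its place), the claim $\mathcal{B}_n(d)\supsetneq\mathcal{E}_n(d)$ for $n\geq 4$ remains unproved, since the inductive machinery has nothing to start from.
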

This means that for $n\geq 4$, some ultrametrics that are not extreme rays may also be produced and pass the criterion. Thus, a direct procedure to produce all and only extreme rays is yet still unknown. The proof of $n=3$ is achieved by using the exterior description and the tangent directed hypergraph techniques \cite{allamigeon2013computing} to characterize the extremality. Meanwhile, the disproof of $n\geq 4$ is done by giving an explicit counterexample of $n=4$ and then using it to generate larger counterexamples inductively. Also, the $n=8$ example given in \cite{bernstein2017} based on biological data is examined and it turns out it is also a counterexample to the sufficiency.

This paper is organized as follows. In Section \ref{sec:prelim} we briefly introduce the background of tropical interior/exterior description, as well as phylogenetic trees and ultrametrics. In Section \ref{sec:extdesc} we give the exterior description of $\ell^\infty$-nearest ultrametrics polytope. In Section \ref{sec:n3} we prove Theorem \ref{maintheorem} for the case $n=3$ and in Section \ref{sec:counter} for the case $n\geq 4$. Finally, we summarize in Section \ref{sec:disc}.

\section{Background}\label{sec:prelim}
\subsection{Tropical double description}
For a complete review of tropical geometry, readers should refer to \cite{maclagan2015introduction, butkovivc2010max, baccelli1992synchronization, speyer2009tropical}.
In $(\max,+)$ tropical geometry, tropical addition $\tmadd$ is defined to be the \textit{max} and tropical multiplication $\tmmul$ to be the usual \textit{addition}. These two operations together with the set $\tmr \defeq \mathbb{R}\cup \{-\infty\}$ forms the max tropical semiring $(\tmr, \tmadd, \tmmul)$. Many concepts of classical geometry such as polyhedral cone and polyhedron have tropical analogs. In the following discussion, we work in the $d$-dimensional ambient space $\tmr^d$ and use the usual boldface vector notation for the elements: $\bm{x}=\{x_1,\cdots,x_d\} \in \tmr^d$. The tropical inner product is defined as: for $\bm{x},\bm{y} \in \tmr^d$,
$$\bm{x}\cdot \bm{y} = \max_{1\leq i \leq d}(x_i + y_i).$$
Let $\argmax(\bm{x}\cdot\bm{y}) \subset [d]$ be the set of indices that achieve the maximum. Similarly, the matrix-vector multiplication is defined as: for $A \in \tmr^{n\times d}$ and $\bm{x} \in \tmr^d$,
$$A\bm{x} \in \tmr^n \sepand (A\bm{x})_k = A_k \cdot \bm{x} = \max_{1\leq i \leq d}(A_{ki} + x_i),$$
where $k\in [n]$ and $A_k$ denotes the $k$th row of matrix $A$.
Tropical halfspace and polyhedral cone are direct analogs of their classical versions:
\begin{definition}[Tropical halfspace]
	 Given $\bm{a},\bm{b} \in \tmr^d$, the tropical halfspace is the set
	 \begin{equation}\label{prelim:eq1}
	 \Set{\bm{x}\in\tmr^d  \given \bm{a}\cdot\bm{x}\leq \bm{b}\cdot\bm{x} }.
	 \end{equation}
\end{definition}
\begin{definition}[Tropical polyhedral cone]
	Given $A, B \in \tmr^{n\times d}$, the tropical polyhedral cone $\cone$ is the set
	\begin{equation}\label{prelim:eq2}
    \cone \defeq \Set{\bm{x}\in\tmr^d  \given A \bm{x} \leq B \bm{x}}.
	\end{equation}
\end{definition}
\noindent
The above definition of tropical polyhedral cone is the exterior description by the set of defining halfspaces. It can also be described by a set of points called extreme rays defined as follows. The cone described by the extreme rays will be referred to as the interior description.
\begin{definition}[Extreme rays]
Let $\cone$ be a tropical polyhedral cone. The set of extreme rays of $\cone$ is the minimal set of points $\{\bm{u}_{j}\} \subseteq \cone$ such that
	\begin{equation}\label{prelim:eq3}
    \cone = \Set{ \bm{x} \in \tmr^d\given \bm{x} = \bigoplus_j \lambda_j \bm{u}_j, \lambda_j \in \tmr}.
	\end{equation}
\end{definition}
\noindent
Given the exterior description of a tropical polyhedral cone, the state-of-the-art algorithm to compute its interior description was given by \cite{allamigeon2014complexity}.
A tropical polyhedron of $\tmr^d$ is an affine version of the tropical polyhedral cone:
\begin{definition}[Tropical polyhedron]
	Given $A, B \in \tmr^{n\times d}$ and $\bm{c},\bm{d}\in \tmr^n$, the tropical polyhedral is the set
	\begin{equation}\label{prelim:eq4}
	\mathcal{P} \defeq \Set{\bm{x}\in\tmr^d  \given A \bm{x} \tmadd \bm{c} \leq B \bm{x} \tmadd \bm{d}}.
	\end{equation}
\end{definition}
\noindent
It is known \cite{gaubert2007minkowski} that a tropical polyhedron in $\tmr^d$ can be transformed into a tropical polyhedral cone in $\tmr^{d+1}$ by a homogenization process which we will show in an example in Section \ref{sec:extdesc}.

\subsection{Phylogenetic trees and ultrametrics}
Let $X$ be a finite set with $n$ elements with labels $i \in [n]$. First we define the dissimilarity map. 
\begin{definition}[Dissimilarity map]
	A dissimilarity map on $X$ is a function $d : X \times X \rightarrow \mathbb{R}$ such that $d_{ii}=0$ and $d_{ij} = d_{ji},\sep \forall i,j \in X$.  A dissimilarity map can be expressed as a symmetric matrix with zero diagonal. We also denote this matrix by $d\in \mathbb{R}^{[n]\choose 2}$.
\end{definition}
\noindent
Given two dissimilarity maps $d_1, d_2$ on $X$, we define the $\ell^\infty$ distance as $$\normi{d_1-d_2}=\max_{i,j\in [n]} |(d_1)_{ij}-(d_2)_{ij}|.$$

Phylogenetic trees come with various types. In this paper we will be focusing on the rooted tree. The weight of the tree can be either defined on the edges or nodes. Here we follow the definitions in \cite{bernstein2017} that assigns weights on the internal nodes of the tree. The rooted tree is defined as follows.
\begin{definition}[Rooted tree]
	A rooted tree $T$ on $X$ is a tree with leaf set $X$ and one interior node is designated as the root, denoted as $\text{root}(T)$.
\end{definition}
\noindent
The descendants of a vertex $v$ in $T$, denoted as $\text{Des}_T(v)$, is all node $v'\neq v$ such that the unique path from $v'$ to $\text{root}(T)$ contains $v$. Denote by $T^\circ$ the set of interior nodes of $T$. Let $\alpha : T^\circ \rightarrow \mathbb{R}$ be a weighting of the internal nodes of $T$. The pair $(T,\alpha)$ induces a dissimilarity map $d_{T,\alpha}$ on $X$ defined by $(d_{T,\alpha})_{ij} = \alpha(v)$ where $v\in T^\circ$ is the node nearest to $\text{root}(T)$ in the unique path from $x_i$ to $x_j$.

\begin{definition}[Ultrametric]
	Given dissimilarity map $\delta$ on $X$, $\delta$ is an ultrametric if it is a metric and satisfies the strengthened triangle inequality
	\begin{equation}\label{prelim:eq5}
	\forall i,j,k \in [n],\, \delta_{ik} \leq \max(\delta_{ij},\delta_{jk}).
	\end{equation}
	Denote by $\mathcal{U}_n$ the set of ultrametrics on $n$ elements.
\end{definition}
\noindent
Given an ultrametric $\delta$ on $X$, we can construct a rooted tree $T$ with weighting $\alpha$ which induces $u$. Also, such construction is unique if $\alpha(u) < \alpha(v), \forall u \in \text{Des}_T(v)$.
\begin{definition}[$\ell^\infty$-nearest ultrametrics]
	Given dissimilarity map $d$, the set of ultrametrics that are $\ell^\infty$-nearest to $d$ is
	\begin{equation}\label{prelim:eq6}
	\argmin_{\delta\in \mathcal{U}_n} \normi{\delta-d}.
	\end{equation}
\end{definition}
\noindent
It was shown by Proposition 3.2 in \cite{bernstein2017} that this set is a tropical polytope. Let $\mathcal{E}_n(d)$ denote the set of all extreme rays of this polytope.

\section{Exterior description of $\ell^\infty$-nearest ultrametrics}\label{sec:extdesc}
The exterior description can be obtained by considering the definition of ultrametrics \eref{prelim:eq5} and the condition for it to be $\ell^\infty$-nearest.

Viewed as a symmetric matrix with zero diagonal, an ultrametric $\delta$ on $n$ elements consists of $n(n-1)/2$ entries. We represent it as a vector $\bm{\delta}$, with these $n(n-1)/2$ elements arranged in lexicographical order. For example, an ultrametric on 3 elements is arranged as $\bm{\delta} = (\delta_{12},\delta_{13},\delta_{23})$. If we look at \eref{prelim:eq5}
\begin{equation}\label{extdesc:eq1}
\forall i,j,k\in [n],\,\, \delta_{ik}\leq \max(\delta_{ij},\delta_{jk}),\tag{\ref{prelim:eq5}}
\end{equation}
we will get $3\times {n \choose 3} = n(n-1)(n-2)/2$ tropical halfspaces.

Then we consider the condition for it to be $\ell^\infty$-nearest. Suppose we are given the dissimilarity map $d=\{d_{ij}\}$, and further, the nearest distance $q = \normi{d-\delta'}$ where $\delta'$ is any $\ell^\infty$-nearest ultrametric. For given $d$, such distance $q$ exists and is unique and it can be calculated by a specific instance of $\ell^\infty$-nearest ultrametric. The following algorithm \cite{chepoi2000approximation} finds one $\ell^\infty$-nearest ultrametric.
\begin{theorem}[Chepoi \& Fichet \cite{chepoi2000approximation}]\label{thm:oneultra}
	Let $d$ be a dissimilarity map on $n$ elements. The map $\delta^*$ constructed in the following way is an ultrametric that is $\ell^\infty$-nearest to $d$.
	\begin{itemize}
		\item Let $K_n=(X,E)$ be a complete graph with edge weights $d_{ij}$ and $T$ be its minimum spanning tree;
		\item define $d^*_{ij}$ to be the maximal edge of the unique path in $T$ connecting $i$ and $j$;
		\item let $q=\normi{d^*-d}/ 2$ and $\delta^* = d^* + q \bm{1}$ where $\bm{1}_{ij}=1,\forall i\neq j\in [n]$.
	\end{itemize}
\end{theorem}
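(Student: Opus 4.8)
The plan is to prove three facts and combine them: (i) $\delta^*$ is indeed an ultrametric; (ii) $\normi{\delta^*-d}=q$; and (iii) $\normi{\delta-d}\geq q$ for \emph{every} ultrametric $\delta\in\mathcal{U}_n$. Facts (ii) and (iii) together say that $\delta^*$ attains $\min_{\delta\in\mathcal{U}_n}\normi{\delta-d}$ and that this minimum value is $q$, which is exactly the assertion (and incidentally confirms that the $q$ in the statement is the same $q$ appearing in the surrounding discussion). The structural input that makes everything work is that the unique-path construction on a minimum spanning tree yields the \emph{subdominant} ultrametric, the largest ultrametric lying weakly below $d$.

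For (i), I would first verify that $d^*$ satisfies the strengthened triangle inequality \eref{prelim:eq5}. Given $i,j,k$, let $m$ be the unique vertex of $T$ at which the three tree-paths $P_{ij},P_{jk},P_{ik}$ meet, so that $P_{ik}=P_{im}\cup P_{mk}$, $P_{ij}=P_{im}\cup P_{mj}$ and $P_{jk}=P_{jm}\cup P_{mk}$. Writing $w(P)$ for the largest edge weight on a path $P$, we get $d^*_{ik}=\max\{w(P_{im}),w(P_{mk})\}\leq\max\{\max\{w(P_{im}),w(P_{mj})\},\max\{w(P_{jm}),w(P_{mk})\}\}=\max\{d^*_{ij},d^*_{jk}\}$. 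Adding the constant $q\geq 0$ to every off-diagonal entry preserves this inequality, so $\delta^*=d^*+q\bm{1}$ also satisfies it, and the remaining metric axioms for $\delta^*$ are immediate. I would also record here the exchange-argument fact that $d^*\leq d$ entrywise: if some edge of $P_{ij}$ had weight exceeding $d_{ij}$, deleting it from $T$ and inserting the chord $\{i,j\}$ would give a strictly lighter spanning tree, contradicting minimality of $T$.

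Fact (ii) is then quick: since $d^*\leq d$ and $\normi{d^*-d}=2q$, every entry satisfies $d^*_{ij}-d_{ij}\in[-2q,0]$, hence $\delta^*_{ij}-d_{ij}=(d^*_{ij}-d_{ij})+q\in[-q,q]$, with the value $-q$ attained at any pair realizing $\normi{d^*-d}=2q$. Fact (iii) is the heart of the argument. Assume $q>0$ (if $q=0$ then $d=d^*$ is already an ultrametric and there is nothing to prove), and fix a pair $(a,b)$ with $d_{ab}-d^*_{ab}=2q$; because $q>0$ this forces $\{a,b\}\notin T$, so the tree-path $P_{ab}$ has length at least two. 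Let $\delta\in\mathcal{U}_n$ be arbitrary and put $r=\normi{\delta-d}$. Every edge $e$ of $P_{ab}$ has $d$-weight at most $w(P_{ab})=d^*_{ab}=d_{ab}-2q$, so $\delta_e\leq d_{ab}-2q+r$; iterating the strengthened triangle inequality along $P_{ab}$ gives $\delta_{ab}\leq\max_{e\in P_{ab}}\delta_e\leq d_{ab}-2q+r$. On the other hand $\delta_{ab}\geq d_{ab}-r$. Combining the two yields $d_{ab}-r\leq d_{ab}-2q+r$, i.e.\ $r\geq q$.

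Steps (i) and (ii) are routine once subdominance $d^*\leq d$ is established; the main obstacle is step (iii), and specifically the act of exhibiting the correct combinatorial certificate — the cycle formed by the tree-path $P_{ab}$ together with the chord $\{a,b\}$, along which the "bottleneck gap" is exactly $2q$ — and then noticing that monotonicity of ultrametrics along this path collapses the two-sided $\ell^\infty$ deviation bound into the one inequality $r\geq q$ that we need.
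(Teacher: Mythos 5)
The paper does not prove this statement at all: it is imported verbatim as a cited result of Chepoi and Fichet, so there is no in-paper argument to compare against. Your proposal supplies a correct, self-contained proof, and it is essentially the standard subdominant-ultrametric argument. The decomposition into (i) ultrametricity of $d^*$ via the median vertex of the three tree-paths, (ii) the two-sided bound $\delta^*_{ij}-d_{ij}\in[-q,q]$ coming from $d^*\leq d$ (justified by the edge-exchange argument for minimum spanning trees), and (iii) the lower bound $\normi{\delta-d}\geq q$ obtained by chaining the strengthened triangle inequality along the tree-path $P_{ab}$ for a pair $(a,b)$ realizing $\normi{d^*-d}=2q$, is exactly the right skeleton, and each step checks out: in (iii) the inequalities $\delta_{ab}\geq d_{ab}-r$ and $\delta_{ab}\leq \max_{e\in P_{ab}}\delta_e\leq d^*_{ab}+r=d_{ab}-2q+r$ do collapse to $r\geq q$. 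One pedantic remark: the paper's definition of ultrametric requires $\delta^*$ to be a metric, and the ordinary triangle inequality follows from the strengthened one only given non-negativity of the entries; this holds under the usual convention that dissimilarity maps are non-negative (which the cited source assumes), so it is worth one sentence in your step (i) but is not a real gap.
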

\noindent
Minimal spanning tree can be easily computed by algorithms such as Kruskal's algorithm. It is worth noting that $d^*_{ij}$ can also be written as the minimal bottleneck between $i$ and $j$:
\begin{equation}\label{extdesc:eq2}
d^*_{ij} = \min_{\text{path $P$ from $i$ to $j$}} \left( \max_{(u,v)\in P} d_{uv}\right).
\end{equation}
\noindent
The $\ell^\infty$-nearest condition gives
\begin{equation}\label{extdesc:eq3}
\forall i\neq j \in [n],\,\, |\delta_{ij}-d_{ij}| \leq q,
\end{equation}
or written separately,
\begin{equation}\label{extdesc:eq4}
\forall i\neq j \in [n],\,\, \delta_{ij}-q \leq d_{ij} \sepand d_{ij} \leq \delta_{ij} + q.
\end{equation}
This gives us $2\times {n \choose 2} = n(n-1)$ inequalities.

These two sets of inequalities \eref{extdesc:eq1} and \eref{extdesc:eq4} together give us the exterior tropical polyhedron description of the $\ell^\infty$-nearest ultrametrics. In total there are $n^2(n-1)/2$ inequalities so the matrix of the tropical polyhedron is of dimension $n^2(n-1)/2 \times n(n-1)/2$. Since the exterior to interior conversion algorithm works on tropical cones, we need to homogenize the tropical polyhedron. By adding auxiliary variable $\xi$ to both sides of \eref{extdesc:eq1} and \eref{extdesc:eq4} and making replacement $\delta_{ij} + \xi \rightarrow \tilde{\delta}_{ij}$, we have
\begin{equation}\label{extdesc:eq5}
\begin{aligned}
&\tilde{\delta}_{ij}\leq \max(\tilde{\delta}_{jk},\tilde{\delta}_{ik}),\\
&\tilde{\delta}_{ij}-q \leq  \xi + d_{ij} \sepand \xi + d_{ij}  \leq \tilde{\delta}_{ij} + q.
\end{aligned}
\end{equation}
\eref{extdesc:eq5} is the exterior tropical cone description of the $\ell^\infty$-nearest ultrametrics.

Allamigeon et al. gives an algorithm that can compute all the extreme rays from the exterior descriptions \cite{allamigeon2014complexity}. Suppose by using this algorithm we find $p$ extreme rays: $\tilde{\bm{\delta}}_1, \cdots, \tilde{\bm{\delta}}_p$. Then any $\ell^\infty$-nearest ultrametrics can be generated by taking tropical linear combination of the extremes
\begin{equation}\label{extdesc:eq6}
\tilde{\bm{\delta}} =  \bigoplus_{i=1}^p \lambda_i  \tilde{\bm{\delta}}_i, \lambda_i \in \tmr,
\end{equation}
followed by a tropical normalization that subtracts the value of the auxiliary variable.

\subsection{A small example ($n=3$)}
We here present explicitly a small example of $n=3$ to demonstrate the above procedure. In the case of $n=3$, $\bm{\delta} = (\delta_{12},\delta_{13},\delta_{23})$.
The ultrametric condition gives
\begin{equation}\label{extdesc:ex:eq1}
\left(
\begin{array}{ccc}
0&& \\
&0& \\
&&0 \\
\end{array}
\right)\bm{\delta} \leq \left(
\begin{array}{ccc}
&0&0 \\
0&&0 \\
0&0& \\
\end{array}
\right)\bm{\delta},
\end{equation}
where blank spots are tropical zero, i.e., $-\infty$. And the nearest condition gives
\begin{equation}\label{extdesc:ex:eq2}
 \left(
\begin{array}{ccc}
\\
\\
\\
-q&&\\
&-q&\\
&&-q\\
\end{array}
\right)\bm{\delta}
\tmadd
 \left(\begin{array}{c}
d_{12}\\
d_{13}\\
d_{23}\\
\\
\\
\\
\end{array}\right)
\leq
 \left(
\begin{array}{ccc}
q&&\\
&q&\\
&&q\\
\\
\\
\\
\end{array}
\right)\bm{\delta}
\tmadd
\left(\begin{array}{c}
\\
\\
\\
d_{12}\\
d_{13}\\
d_{23}\\
\end{array}\right).
\end{equation}
Let $\tilde{\bm{\delta}} = (\xi, \tilde{\delta}_{12},\tilde{\delta}_{13}, \tilde{\delta}_{23})$. After homogenization we get
\begin{equation}\label{extdesc:ex:eq3}
\left(
\begin{array}{cccc}
&0&& \\
&&0& \\
&&&0 \\
d_{12}&&&\\
d_{13}&&&\\
d_{23}&&&\\
&-q&&\\
&&-q&\\
&&&-q\\
\end{array}
\right) \tilde{\bm{\delta}}
\leq
\left(
\begin{array}{cccc}
&&0&0 \\
&0&&0 \\
&0&0& \\
&q&&\\
&&q&\\
&&&q\\
d_{12}&&&\\
d_{13}&&&\\
d_{23}&&&\\
\end{array}
\right) \tilde{\bm{\delta}}.
\end{equation}

To be more concrete, let the given dissimilarity map to be
\begin{equation}\label{extdesc:ex:eq4}
d = \left(
\begin{array}{ccc}
0&2&4 \\
2&0&8 \\
4&8&0 \\
\end{array}
\right),
\end{equation}
i.e., $d_{12}=2, d_{13}=4, d_{23}=8$. The one $\ell^\infty$-nearest ultrametric given by Theorem \ref{thm:oneultra} is
\begin{equation}\label{extdesc:ex:eq5}
\delta^* = \left(
\begin{array}{ccc}
0&4&6 \\
4&0&6 \\
6&6&0 \\
\end{array}
\right),
\end{equation}
corresponding to the tree shown in \fref{fig:oneultra}. Along with the ultrametric, we also get $q = \normi{d-\delta^*} = 2$.
\begin{figure}[!htpb]
	\centering
	\includegraphics{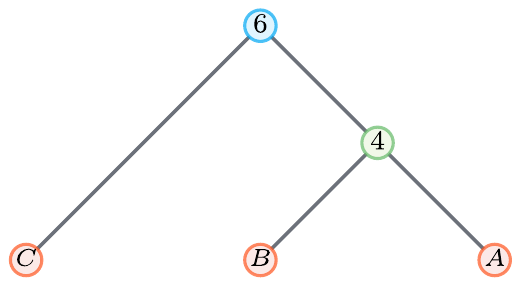}
	\caption{One ultrametric returned by Theorem \ref{thm:oneultra}}
	\label{fig:oneultra}
\end{figure}

\noindent
\eref{extdesc:ex:eq3} realizes to be
\begin{equation}\label{extdesc:ex:eq6}
\left(
\begin{array}{cccc}
&0&& \\
&&0& \\
&&&0 \\
2&&&\\
4&&&\\
8&&&\\
&-2&&\\
&&-2&\\
&&&-2\\
\end{array}
\right) \tilde{\bm{\delta}}
\leq
\left(
\begin{array}{cccc}
&&0&0 \\
&0&&0 \\
&0&0& \\
&2&&\\
&&2&\\
&&&2\\
2&&&\\
4&&&\\
8&&&\\
\end{array}
\right) \tilde{\bm{\delta}}.
\end{equation}
Using the extreme computing algorithm we find two (normalized) extreme rays:
\begin{equation}\label{extdesc:ex:eq7}
\tilde{\bm{\delta}} = \left(
\begin{array}{c}
0\\
0\\
6\\
6\\
\end{array}
\right) \sepand
\tilde{\bm{\delta}}=
\left(
\begin{array}{c}
0\\
4\\
6\\
6\\
\end{array}
\right) \longrightarrow
\bm{\delta} = \left(
\begin{array}{c}
0\\
6\\
6\\
\end{array}
\right) \sepand
\bm{\delta}=
\left(
\begin{array}{c}
4\\
6\\
6\\
\end{array}
\right).
\end{equation}
They corresponds to the two trees shown in \fref{fig:exextreme} respectively.
\begin{figure}[!htbp]
	\centering
	\begin{subfigure}[b]{0.48\textwidth}
        \centering
		\includegraphics{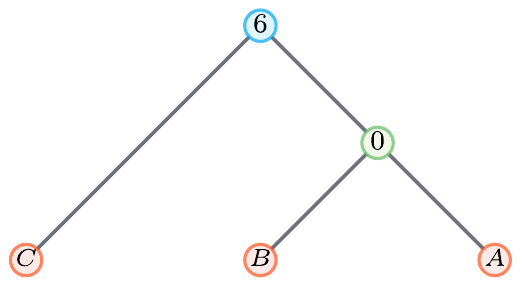}
	\end{subfigure}
	\hfill
	\begin{subfigure}[b]{0.48\textwidth}
        \centering
		\includegraphics{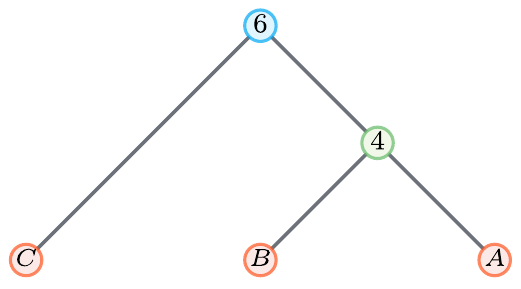}
	\end{subfigure}
	\caption{Extreme ultrametrics $\ell^\infty$-nearest to the dissimilarity map given by \eref{extdesc:ex:eq4}}
	\label{fig:exextreme}
\end{figure}

Finally, any $\ell^\infty$-nearest ultrametrics can be written as the tropical linear combination as in \eref{extdesc:eq6} of the above two ultrametrics.

\section{Proof of Theorem \ref{maintheorem} for the case $n=3$}\label{sec:n3}
Bernstein\cite{bernstein2017} gives a necessary condition to be extreme. It generates a set containing all extreme rays and some non-extreme ones, within which the extreme ones should satisfy the criterion. Before stating the theorem, we need the following definition for the mobility of nodes.
\begin{definition}[Mobility]
	Let $d$ be a dissimilarity map on $X$ and let $\delta$ be an ultrametric that is closest to $d$ in the $\ell^\infty$-norm. Let $T$ be a resolution of the topology of $\delta$ and let $\alpha$ be the internal nodes weighting such that $d_{T,\alpha}=\delta$. An internal node $v$ of $T$ is said to be \textit{mobile} if there exists an ultrametric $\hat{\delta} \neq \delta$, expressible as $\hat{\delta} = d_{T,\hat{\alpha}}$ such that
	\begin{itemize}
		\item $\hat{\delta}$ is also nearest to $\delta$ in the $\ell^\infty$-norm,
		\item $\hat{\alpha}(x) = \alpha(x), \forall x \in T^\circ, x\neq v$, and
		\item $\hat{\alpha}(v) \leq \alpha(v)$.
	\end{itemize}
	In this case, we say that $\hat{\delta}$ is obtained from $\delta$ by sliding $v$ down. If moreover $v$ is no longer mobile in $d_{T,\hat{\alpha}}$, i.e., if $\hat{\alpha}(v) = \max\{\alpha(y) :y \in \text{Des}_T(v)\}$, or $\hat{\alpha}(v)$ is the minimum value such that $d_{T,\hat{\alpha}}$ is nearest to $d$ in the $\ell^\infty$-norm, then we say that $\hat{\delta}$ is obtained from $\delta$ by sliding $v$ all the way down.
\end{definition}
\noindent
Intuitively, the mobility of an internal node indicates the potential to decrease its weight without violating the conditions for ultrametric and $\ell^\infty$-nearest. 
\begin{theorem}[Bernstein \cite{bernstein2017}] \label{thm:charac}
	Let $d$ be a dissimilarity map on $X$. Let $\delta^*$ be the ultrametric generated by Theorem \ref{thm:oneultra}. Let $S_0=\{\delta^*\}$, and for each $i\geq 1$ define $S_i$ to be the set of ultrametrics obtained from some $u\in S_{i-1}$ by sliding a mobile internal node of a resolution of the topology of $u$ all the way down. Define $$\mathcal{B}_n(d) = \{\delta \in \cup_i S_i \, | \, \delta \text{ has at most one mobile internal node}\}.$$
Then $\mathcal{B}_n(d) \supseteq \mathcal{E}_n(d)$.
\end{theorem}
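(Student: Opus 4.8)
I would separate the two requirements for membership in $\mathcal{B}_n(d)$: \textbf{(i)} an extreme ray has at most one mobile internal node, and \textbf{(ii)} an extreme ray is reachable from $\delta^*$ by ``slide all the way down'' moves, i.e.\ lies in $\cup_i S_i$. The starting observation for both is that $\delta^*$ is the entrywise-largest element of the polytope $\mathcal{P}$ of $\ell^\infty$-nearest ultrametrics: if $\delta\in\mathcal{P}$ then $\normi{\delta-d}=q$, so $\delta_{ij}\le d_{ij}+q$ for all $i,j$; since subtracting a constant from all off-diagonal entries preserves the ultrametric inequality, $\delta-q\bm{1}$ is an ultrametric lying entrywise below $d$, hence below the largest such ultrametric, which is exactly the map $d^*$ of \eref{extdesc:eq2}; thus $\delta\le d^*+q\bm{1}=\delta^*$. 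One also checks, directly from the definition of sliding all the way down, that this operation keeps one inside $\mathcal{P}$, since it halts precisely at the value where either the ultrametric inequality or the bound $\delta_{ij}\ge d_{ij}-q$ would fail next.

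\textbf{(i)} Suppose $\delta\in\mathcal{P}$ has two distinct mobile internal nodes $v_1\neq v_2$ in some resolution $(T,\alpha)$ of its topology. Let $\delta_\ell$ be the result of sliding $v_\ell$ all the way down; by the remark above $\delta_1,\delta_2\in\mathcal{P}$. The only coordinates on which $\delta_\ell$ differs from $\delta$ are the leaf-pairs whose join in $T$ is $v_\ell$, and on those $\delta_\ell<\delta$ strictly (mobility forces the target weight to be strictly below $\alpha(v_\ell)$, and a mobile node is the join of at least one pair). Because a pair has a unique join these two coordinate sets are disjoint, and because $T$ has at least two internal nodes neither set is all of ${[n]\choose 2}$. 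Hence $\delta_1\oplus\delta_2=\delta$ coordinatewise, while neither $\delta_\ell$ is a tropical scalar multiple of $\delta$ (agreement on the unchanged coordinates would force the scalar to be $0$, contradicting the strict inequality on the changed ones). So $\delta$ is a tropical sum of two elements of $\mathcal{P}$, neither a scalar multiple of it, hence not extreme; equivalently, every extreme ray has at most one mobile node in every resolution.

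\textbf{(ii)} For reachability I would show that $R:=\cup_i S_i$ tropically spans $\mathcal{P}$; since the extreme rays are by definition contained (up to tropical scaling) in every generating set, $\mathcal{E}_n(d)\subseteq R$ follows, and the inclusion $\mathrm{tconv}(R)\subseteq\mathcal{P}$ is clear. For $\mathcal{P}\subseteq\mathrm{tconv}(R)$ I would induct by peeling off one slide at a time: given $\delta\in\mathcal{P}$ with $\delta\neq\delta^*$, observe (using (i)) that all but at most one internal node of a resolution of $\delta$ is \emph{pinned}, i.e.\ sits at the maximum of its children's weights and of the bounds $d_{ij}-q$ over the pairs it joins --- and this pinned value is exactly the terminus of a ``slide all the way down.'' Choosing such a pinned node and raising its weight produces a strictly larger $\delta'\in\mathcal{P}$ from which $\delta$ is recovered by one slide; one then expresses $\delta$ as a tropical combination of $\delta'$ (handled by the inductive hypothesis) together with a bounded list of auxiliary reachable ultrametrics that absorb the coordinates the rigid slide overshoots, the induction being on, say, the number of non-pinned internal nodes plus the number of clades of $\delta$ not yet realized.

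The step I expect to be the real obstacle is exactly this reachability argument, and the source of difficulty is that ``slide all the way down'' is rigid: a single slide generally undershoots the values $\delta$ requires on the affected pairs and, when a node meets its children, de-resolves the topology, after which a \emph{different} resolution must be chosen to recreate clades of $\delta$ that are absent higher up. Making the induction well-founded therefore hinges on picking the right pinned node to lift (one whose lift does not disturb the already-correct portion of $\delta$) and on controlling the overshoot via an auxiliary tropical decomposition in the spirit of (i); the case $n=3$ of Theorem~\ref{maintheorem}, established later through the exterior description, is essentially the base case in which this bookkeeping collapses.
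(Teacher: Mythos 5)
This statement is quoted from Bernstein \cite{bernstein2017} as background; the paper gives no proof of it, so there is nothing internal to compare your argument against. Evaluating your attempt on its own terms: part (i) is essentially correct and complete. The observation that $\delta^*=d^*+q\bm{1}$ dominates every point of the polytope (via the subdominant ultrametric $d^*$), the decomposition $\delta=\delta_1\oplus\delta_2$ obtained by sliding two distinct mobile nodes all the way down, the disjointness of the two affected coordinate sets, and the conclusion that $\delta$ is then a tropical sum of two points of the polytope neither equal to it, are exactly the right mechanism for showing that an extreme ray has at most one mobile node.

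Part (ii) has a genuine gap beyond the one you flag. You invoke (i) to assert that an arbitrary $\delta\in\mathcal{P}$ with $\delta\neq\delta^*$ has all but at most one internal node pinned, but (i) constrains only \emph{extreme} points; a generic element of $\mathcal{P}$ can have many mobile nodes, so the peeling induction does not get started in the form you state it. Even if you restrict attention to extreme $\delta$ (legitimate, since it suffices to show each extreme ray lies in $\cup_i S_i$), the crux is to show that the exact values $\delta$ takes are reproduced by some chain of rigid ``slide all the way down'' moves starting from $\delta^*$ --- that a pinned node of $\delta$ sits precisely at a terminus reachable from a predecessor already known to be in $\cup_i S_i$, with the correct resolution available at each stage. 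Your sketch replaces this with an induction whose measure (``non-pinned internal nodes plus unrealized clades'') is neither made precise nor shown to decrease, and with unspecified ``auxiliary reachable ultrametrics'' absorbing the overshoot. You correctly identify this as the obstacle, but as written the reachability half is a plan rather than a proof; to use the result one should either complete that argument or, as this paper does, cite \cite{bernstein2017} for it.
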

\noindent

In this proof, we will use the tangent directed hypergraph techniques from \cite{allamigeon2013computing}. We state some definitions and a theorem we will use in the following.
\begin{definition}[Tangent directed hypergraph]
	The tangent directed hypergraph at $\bm{v}$, denoted by $\mathcal{G}(\bm{v},\cone)$, consisting of the hyperarcs $(\argmax(B_k\bm{v}), \argmax(A_k\bm{v}))$ for every $k\in [p]$ such that $A_k \bm{v} =B_k\bm{v} >-\infty$.
\end{definition}

\noindent
As an example, we construct the tangent directed hypergraphs of \eref{extdesc:ex:eq6} with respect to non-extreme vector $\bm{\delta} = \tp{(0,2,6,6)}$ and extreme one $\bm{\delta} = \tp{(0,0,6,6)}$, as shown in \fref{fig:proofn3:extangent}. The corresponding indices are labeled by $\xi, \delta_{12}, \delta_{13}, \delta_{23}$. In these cases, the hypergraphs are actually usual graphs.
\begin{figure}[!htbp]
	\centering
	\begin{subfigure}[b]{0.48\textwidth}
		\centering
		\includegraphics{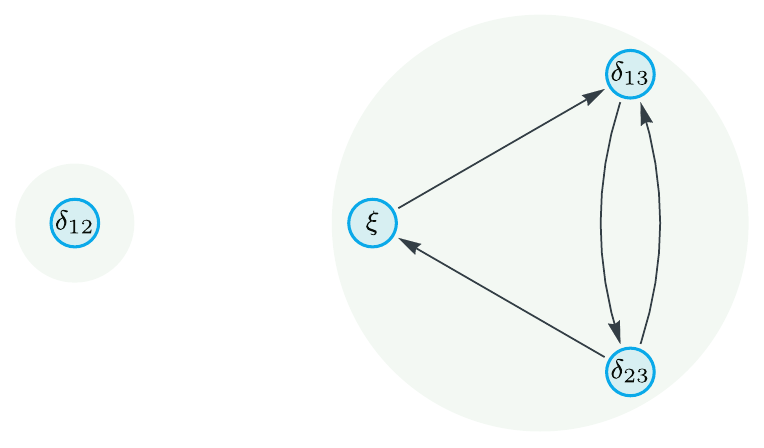}
	\end{subfigure}
	\hfill
	\begin{subfigure}[b]{0.48\textwidth}
		\centering
		\includegraphics{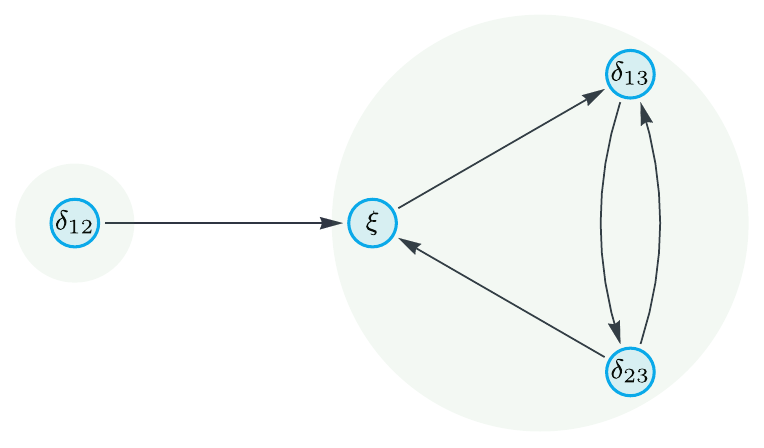}
	\end{subfigure}
	\caption{Examples of tangent directed hypergraph (left: $\tp{(0,2,6,6)}$, right: $\tp{(0,0,6,6)}$)}
	\label{fig:proofn3:extangent}
\end{figure}

\begin{definition}[Strongly connected components (SCC)]
	The strongly connected components $C$ of a directed hypergraph $\mathcal{H}$ are the equivalence classes of the relation $\equiv_{\mathcal{H}}$, defined by $u \equiv_{\mathcal{H}} v$ if $u \rightsquigarrow_{\mathcal{H}} v$ and $v \rightsquigarrow_{\mathcal{H}}  u$. SCCs are partially ordered by the relation $\preceq_{\mathcal{H}}$ induced by reachability: $C \preceq_{\mathcal{H}} C'$ if $C$ and $C'$ admit a representative $u$ and $u'$ such that $u\rightsquigarrow_{\mathcal{H}} u'$.
\end{definition}
\begin{theorem}[Allamigeon, Gaubert \& Goubault \cite{allamigeon2013computing}]\label{thm:all13}
	Let $\cone$ be a tropical polyhedral cone. A vector $\bm{v} \in \cone$ is tropically extreme if, and only if, the set of the strongly connected components of the tangent directed hypergraph at $\bm{v}$ in $\cone$, partially ordered by the reachability relation, admits a greatest element.
\end{theorem}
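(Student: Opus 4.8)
The plan is to prove the characterization by translating the tropical notion of extremality into a purely combinatorial condition on the tangent hypergraph $\mathcal{G}(\bm v,\cone)$, via an infinitesimal perturbation analysis of the tight defining inequalities. Throughout I restrict to the support $\mathrm{supp}(\bm v)=\{i : v_i>-\infty\}$, since the argmax set of every tight row lies inside it and coordinates off the support are inert. For $S\subseteq\mathrm{supp}(\bm v)$ write $\bm 1_S$ for the vector equal to $1$ on $S$ and $0$ elsewhere, and call $S$ \emph{closed} if, for every tight row $k$ (one with $A_k\bm v=B_k\bm v>-\infty$), the containment $\argmax(B_k\bm v)\subseteq S$ forces $\argmax(A_k\bm v)\subseteq S$; this is precisely the condition that $S$ be forward-closed under the hyperarcs $(\argmax(B_k\bm v),\argmax(A_k\bm v))$ of $\mathcal{G}(\bm v,\cone)$.

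First I would establish the perturbation lemma: for small $\epsilon>0$ the point $\bm v-\epsilon\,\bm 1_S$ lies in $\cone$ if and only if $S$ is closed. A slack row survives any sufficiently small perturbation, and a row with $A_k\bm v=B_k\bm v=-\infty$ is never violated by lowering coordinates; for a tight row, lowering the coordinates of $S$ drops the $B$-side below its common value exactly when $\argmax(B_k\bm v)\subseteq S$, and then the inequality forces the $A$-side to drop as well, i.e. $\argmax(A_k\bm v)\subseteq S$. The same bookkeeping, applied to an arbitrary $\bm w\in\cone$ with $\bm w\le\bm v$ and $\bm w\neq\bm v$, shows that its strict-decrease set $S_{\bm w}=\{i : w_i<v_i\}$ is always closed; this is the consequence I actually use.

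Next I would reduce extremality to a single set-theoretic statement. If $\bm v$ is not extreme it admits a representation $\bm v=\bigoplus_j \bm w^{(j)}$ with each $\bm w^{(j)}\in\cone$, $\bm w^{(j)}\le\bm v$, and $\bm w^{(j)}\neq\bm v$; then each $S_{\bm w^{(j)}}$ is a nonempty closed set and $\bigcap_j S_{\bm w^{(j)}}=\emptyset$, since every coordinate is attained by some summand. Conversely, closed sets are stable under intersection, so the intersection $M$ of all nonempty closed sets is well defined; if $M\neq\emptyset$ it is the least nonempty closed set, and any decomposition of $\bm v$ would force all summands strictly below $\bm v$ at a fixed coordinate of $M$, which is impossible, so $\bm v$ is extreme. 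If instead $M=\emptyset$, then the proper nonempty closed sets $\mathrm{cl}(\{g\})$ (the smallest closed set containing $g$, equivalently its forward closure) already intersect to $\emptyset$, and the perturbations $\bm v-\epsilon\,\bm 1_{\mathrm{cl}(\{g\})}$ furnish an explicit nontrivial decomposition. Hence $\bm v$ is extreme if and only if $M\neq\emptyset$.

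Finally I would identify $M\neq\emptyset$ with the greatest-element condition. Each singleton closure $\mathrm{cl}(\{g\})$ is exactly the set of nodes reachable from $g$, and every nonempty closed set contains some $\mathrm{cl}(\{g\})$, so $M=\bigcap_{g}\mathrm{cl}(\{g\})$ is the set of nodes reachable from \emph{every} node. Any two such nodes are mutually reachable, hence lie in one strongly connected component $C^{*}$, and $C^{*}$ being reachable from every node is precisely the statement that $C^{*}$ is the greatest element of the poset of SCCs under $\preceq$. Thus $M\neq\emptyset$ holds if and only if the SCC poset has a greatest element, completing the equivalence. I expect the main obstacle to be the perturbation lemma together with the faithful matching of hypergraph reachability with forward-closure — aligning the direction of the hyperarcs and the ``fire only when the whole tail is reached'' semantics exactly with the inequality $A_k\bm x\le B_k\bm x$ — since every later reduction becomes routine once that dictionary is fixed; the $-\infty$/support bookkeeping and the stability of closed sets under intersection are the remaining points that need care.
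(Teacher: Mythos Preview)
The paper does not contain a proof of this theorem at all: Theorem~\ref{thm:all13} is quoted from \cite{allamigeon2013computing} as an external tool and is used without proof in Sections~\ref{sec:n3} and~\ref{sec:counter}. There is therefore nothing in the present paper to compare your argument against.

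That said, your proposal is essentially the argument of Allamigeon--Gaubert--Goubault in the cited reference. The key ingredients --- restricting to the support, the perturbation lemma characterizing which index sets $S$ allow $\bm v-\epsilon\,\bm 1_S\in\cone$ in terms of forward-closure under the hyperarcs, the reduction of extremality to the nonemptiness of the intersection of all nonempty closed sets, and the identification of that intersection with a greatest SCC --- are exactly the steps in their proof. Your sketch is correct in outline; the one place to be careful is the direction of the hyperarcs: in the paper's convention the hyperarc attached to a tight row $k$ is $(\argmax(B_k\bm v),\argmax(A_k\bm v))$, with tail on the $B$-side, so ``closed'' must mean that containing the full $B$-argmax forces containment of the $A$-argmax, which is what you wrote. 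With that dictionary fixed, the rest is routine.
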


\noindent
In \fref{fig:proofn3:extangent}, SCCs are indicated by shading. The non-extreme one on the left has two disconnected SCCs, so it does not have a greatest one. One the other hand, the extreme one has a greatest SCC $\{ \xi, \delta_{13}, \delta_{23} \}$.
Now we are ready to prove Theorem \ref{maintheorem} for the case $n=3$.
\begin{proof}
Without loss of generality, assume $d_{12} \leq d_{13} \leq d_{23}$ (this can be done by relabeling the three items). The one $\ell^\infty$-nearest ultrametric given by Theorem \ref{thm:oneultra} is
\begin{equation}\label{proofn3:eq1}
\delta^* = \left(
\begin{array}{ccc}
0&d_{12} + q & d_{13} + q \\
d_{12} + q & 0 & d_{13} + q \\
d_{13} + q & d_{13} + q & 0 \\
\end{array}
\right),
\end{equation}
where $q = (d_{23}-d_{13})/2$.
We discuss by cases in the following.

\noindent
\textbf{Case 1:} First we consider $d_{12} < d_{13} < d_{23}$. Since $d_{13}+q - d_{13} = q$ and $d_{13}+q - d_{23} = -q$, we know that the root node is immobile. On the other hand, we can slide down the other node to $d_{12}-q$ while still keep it nearest. So we have the following two candidate rays (one in the case of $d_{13}=d_{23}$):
\begin{equation}\label{proofn3:eq2}
\tilde{\bm{\delta}} = \left(
\begin{array}{c}
0\\
d_{12}+q\\
d_{13}+q\\
d_{13}+q\\
\end{array}
\right) \sepand
\tilde{\bm{\delta}}=
\left(
\begin{array}{c}
0\\
d_{12}-q\\
d_{13}+q\\
d_{13}+q\\
\end{array}
\right),
\end{equation}
where the first one has one mobile node and the second one has none. Recall the exterior description \eref{extdesc:ex:eq3}:
\begin{equation}
\left(
\begin{array}{cccc}
&0&& \\
&&0& \\
&&&0 \\
d_{12}&&&\\
d_{13}&&&\\
d_{23}&&&\\
&-q&&\\
&&-q&\\
&&&-q\\
\end{array}
\right) \tilde{\bm{\delta}}
\leq
\left(
\begin{array}{cccc}
&&0&0 \\
&0&&0 \\
&0&0& \\
&q&&\\
&&q&\\
&&&q\\
d_{12}&&&\\
d_{13}&&&\\
d_{23}&&&\\
\end{array}
\right) \tilde{\bm{\delta}}.\tag{\ref{extdesc:ex:eq3}}
\end{equation}
Their corresponding tangent directed hypergraphs are shown in \fref{fig:proofn3:case1}, where indices are labeled by $\xi, \delta_{12}, \delta_{13}, \delta_{23}$ and SCCs are indicated by shading. They all have a greatest SCC: $\{\delta_{12}\}$ for the first one and $\{\xi, \delta_{13},\delta_{23}\}$ the second one, which means that both of them are extreme rays.
\begin{figure}[!htbp]
	\centering
	\begin{subfigure}[b]{0.48\textwidth}
		\centering
		\includegraphics{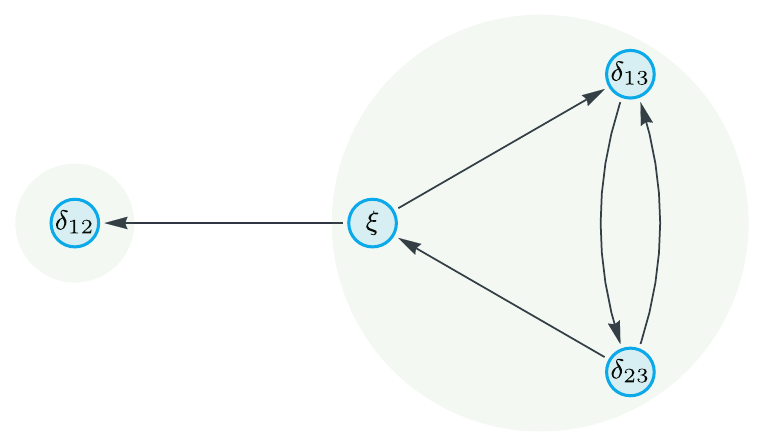}
	\end{subfigure}
	\hfill
	\begin{subfigure}[b]{0.48\textwidth}
		\centering
		\includegraphics{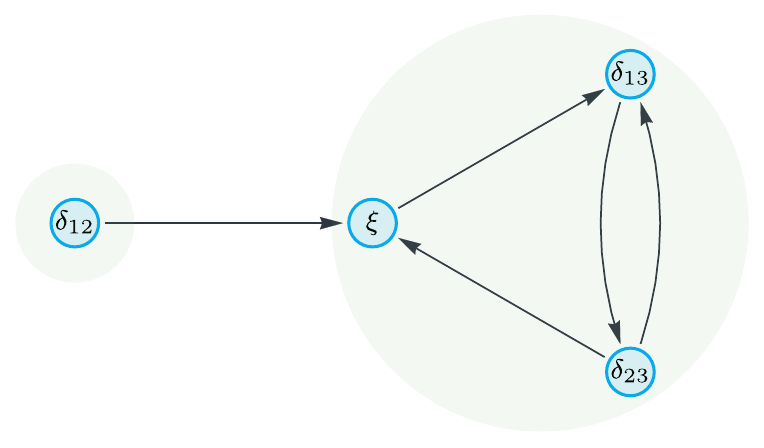}
	\end{subfigure}
	\caption{Tangent hypergraphs of case $d_{12}<d_{13}<d_{23}$}
	\label{fig:proofn3:case1}
\end{figure}

\noindent
\textbf{Case 2:} In the case of $d_{12}= d_{13} < d_{23}$, the tree is not binary. Each of the two resolutions corresponds to a candidate ray:
\begin{equation}\label{proofn3:eq3}
\tilde{\bm{\delta}} = \left(
\begin{array}{c}
0\\
d_{12}+q\\
d_{12}-q\\
d_{12}+q\\
\end{array}
\right) \sepand
\tilde{\bm{\delta}}=
\left(
\begin{array}{c}
0\\
d_{12}-q\\
d_{12}+q\\
d_{12}+q\\
\end{array}
\right).
\end{equation}
As shown in \fref{fig:proofn3:case2}, their corresponding tangent hypergraphs all have greatest SCC: $\{\xi,\delta_{12},\delta_{23}\}$ for the first one and $\{\xi,\delta_{13},\delta_{23}\}$ for the second one. So both of them are extreme rays.
\begin{figure}[!htbp]
	\centering
	\begin{subfigure}[b]{0.48\textwidth}
		\centering
		\includegraphics{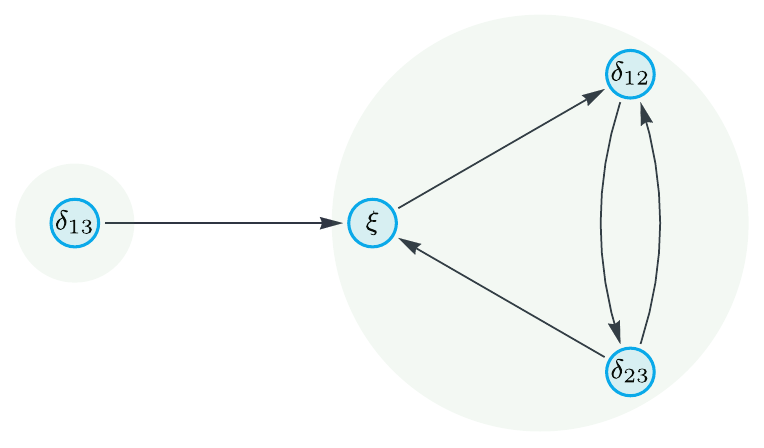}
	\end{subfigure}
	\hfill
	\begin{subfigure}[b]{0.48\textwidth}
		\centering
		\includegraphics{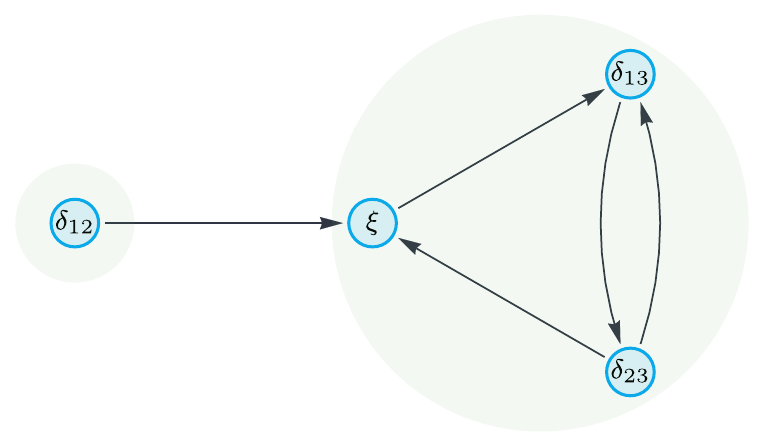}
	\end{subfigure}
	\caption{Tangent hypergraphs of case $d_{12}=d_{13}<d_{23}$}
	\label{fig:proofn3:case2}
\end{figure}

\noindent
\textbf{Case 3:} In the case of $d_{12}\leq d_{13} = d_{23}$, we have $q=0$. There is no freedom to slide the internal nodes. Thus, the only candidate ray is just the one given by Theorem \ref{thm:oneultra}:
\begin{equation}\label{proofn3:eq4}
\tilde{\bm{\delta}} = \left(
\begin{array}{c}
0\\
d_{12}\\
d_{13}\\
d_{13}\\
\end{array}
\right).
\end{equation}
Their corresponding tangent hypergraph is itself a SCC, thus it is greatest, indicating it extreme ray. The hypergraph is shown in \fref{fig:proofn3:case3}.
\begin{figure}[!htbp]
	\centering
	\includegraphics{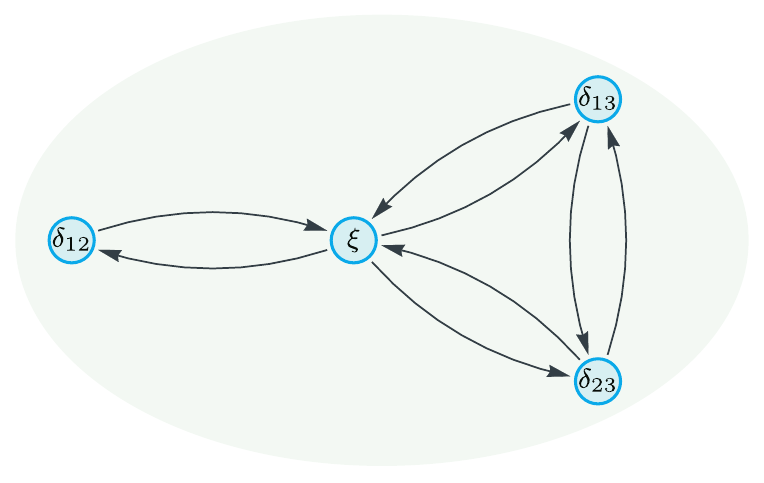}
	\caption{Tangent hypergraph of case $d_{12}\leq d_{13}=d_{23}$}
	\label{fig:proofn3:case3}
\end{figure}

With all possible cases discussed above, we can now conclude that Bernstein's characterization is sufficient when $n=3$. $\qed$
\end{proof}

\section{Proof of Theorem \ref{maintheorem} for the case $n \geq 4$}\label{sec:counter}
In this section, we show by counterexamples that the Bernstein's characterization \cite{bernstein2017} is not sufficient for $n\geq 4$. First we give a counterexample of $n=4$.
\subsection{An $n=4$ counterexample}
The given dissimilarity map and the one $\ell^\infty$-nearest ultrametric given by Theorem \ref{thm:oneultra} are
\begin{equation}\label{cntex2:eq1}
d = \left(
\begin{array}{cccc}
0 & 6 & 6 & 5 \\
6 & 0 & 14 & 12 \\
6 & 14 & 0 & 9 \\
5 & 12 & 9 & 0 \\
\end{array}
\right), \sep \delta^* = \left(
\begin{array}{cccc}
0 & 10 & 10 & 9 \\
10 & 0 & 10 & 10 \\
10 & 10 & 0 & 10 \\
9 & 10 & 10 & 0 \\
\end{array}
\right) \sepand q=4.
\end{equation}
By constructing the exterior description and using the extreme computing algorithm, we find 8 extreme rays, shown as columns in the following matrix.
\begin{equation}\label{cntex2:eq3}
\left(
\begin{array}{cccccccc}
10 & 10 & 8 & 2 & 2 & 10 & 10 & 9 \\
5 & 2 & 10 & 10 & 10 & 9 & 2 & 10 \\
1 & 5 & 1 & 8 & 9 & 9 & 9 & 9 \\
10 & 10 & 10 & 10 & 10 & 10 & 10 & 10 \\
10 & 10 & 8 & 8 & 9 & 10 & 10 & 8 \\
5 & 5 & 10 & 10 & 10 & 5 & 9 & 10 \\
\end{array}
\right).
\end{equation}
The corresponding trees are shown in \fref{fig:cntex2:extremes}.
\begin{figure}[!htbp]
	\centering
	\begin{subfigure}[b]{0.48\textwidth}
		\centering
		\includegraphics{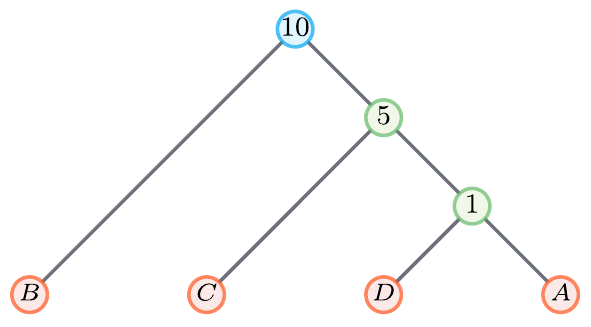}
	\end{subfigure}
	\hfill
	\begin{subfigure}[b]{0.48\textwidth}
		\centering
		\includegraphics{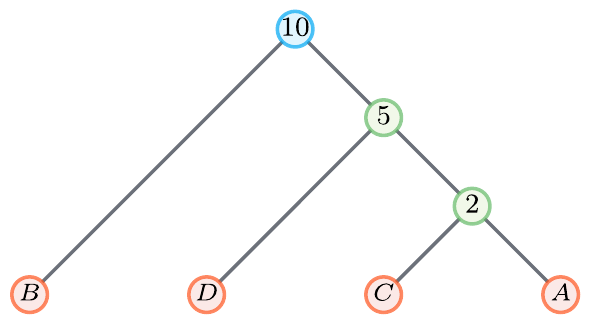}
	\end{subfigure}
	\\
	\begin{subfigure}[b]{0.48\textwidth}
		\centering
		\includegraphics{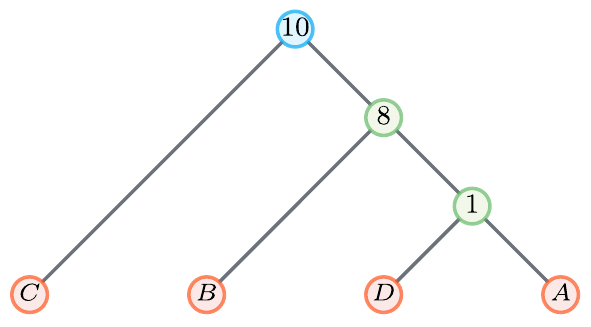}
	\end{subfigure}
	\hfill
	\begin{subfigure}[b]{0.48\textwidth}
		\centering
		\includegraphics{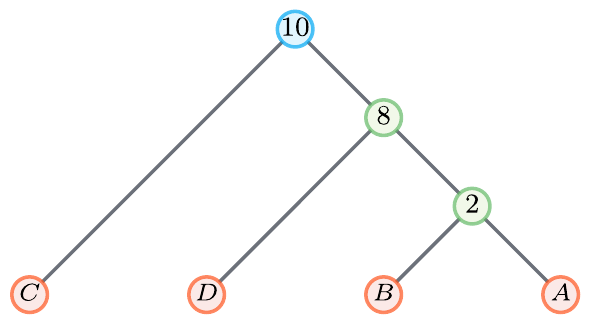}
	\end{subfigure}
	\\
	\begin{subfigure}[b]{0.48\textwidth}
		\centering
		\includegraphics{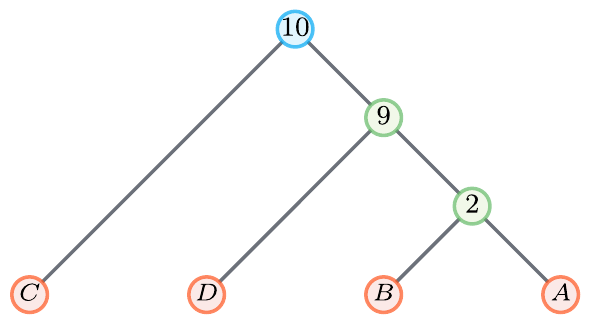}
	\end{subfigure}
	\hfill
	\begin{subfigure}[b]{0.48\textwidth}
		\centering
		\includegraphics{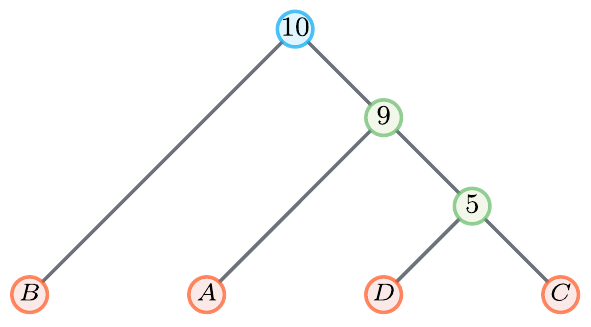}
	\end{subfigure}
	\\
	\begin{subfigure}[b]{0.48\textwidth}
		\centering
		\includegraphics{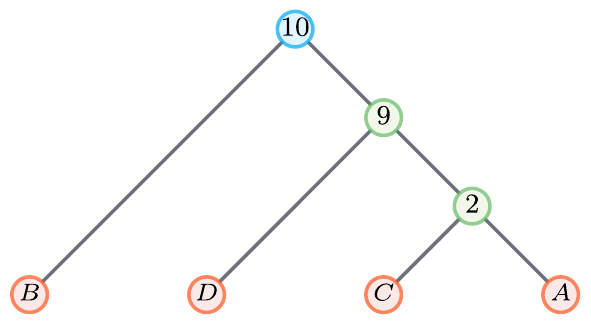}
	\end{subfigure}
	\hfill
	\begin{subfigure}[b]{0.48\textwidth}
		\centering
		\includegraphics{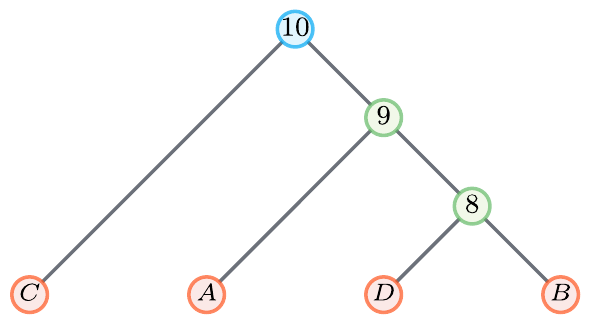}
	\end{subfigure}
	\caption{Extreme rays $\ell^\infty$-nearest to the dissimilarity map given by \eref{cntex2:eq1}}
	\label{fig:cntex2:extremes}
\end{figure}
However, the procedure in \cite{bernstein2017} will generate two more trees that also satisfy the characterization, as shown in \fref{fig:cntex2:more}.
\begin{figure}[!htbp]
	\centering
	\begin{subfigure}[b]{0.48\textwidth}
		\centering
		\includegraphics{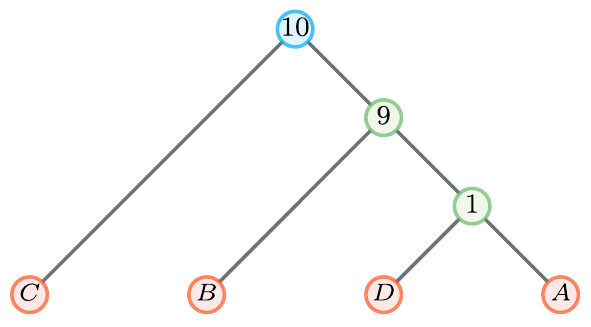}
	\end{subfigure}
	\hfill
	\begin{subfigure}[b]{0.48\textwidth}
		\centering
		\includegraphics{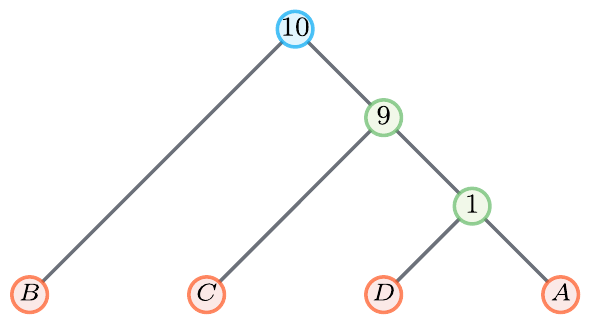}
	\end{subfigure}
	\caption{Non-extreme ultrametrics that satisfy the characterization}
	\label{fig:cntex2:more}
\end{figure}

\subsection{Construction of counterexamples for $n>4$}
We present a way to construct any $n>4$ ones inductively, i.e., size $n+1$ counterexample can be generated from size $n$ counterexample. First we prove the following lemma.
\begin{lemma}\label{lma:addnode}
	Suppose ${\delta} \in \mathbb{R}^{{n \choose 2 }}$ represents an $\ell^\infty$-nearest ultrametric to a given dissimilarity map $d$ on $n \geq 3$ elements. Define the extended dissimilarity map $d'$ on $n+1$ elements by $d'_{ij} = d_{ij}$ if $i, j \leq n$ and $d'_{i,n+1} = r +q+\epsilon$ if $i \leq n$ where $r$ is the root weight of ultrametric ${{\delta}}$ and $\epsilon >0$ is an arbitrary positive number. Define the extended ultrametric  ${{\delta}}' \in \mathbb{R}^{{n+1 \choose 2 }}$ by ${\delta}'_{ij}= {\delta}_{ij}$ if $i, j \leq n$ and ${\delta}'_{i,n+1} = r+\epsilon$. Then the extremality of ${{\delta}}'$ given $d'$ is the same as the extremality of ${{\delta}}$ given $d$, i.e., $\delta' \in \mathcal{E}_n(d') \Leftrightarrow \delta \in \mathcal{E}_n(d)$.
\end{lemma}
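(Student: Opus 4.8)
The natural strategy is to show that the tangent directed hypergraph of $\delta'$ in the exterior cone for $d'$ on $n+1$ elements ``contains'' the tangent hypergraph of $\delta$ in the cone for $d$ on $n$ elements, together with a controlled set of extra hyperarcs coming from the new pairs $\{i,n+1\}$, and that these extra hyperarcs do not change whether the SCC poset has a greatest element. Concretely, I would first write down explicitly the exterior description \eref{extdesc:eq5} for $d'$: the ultrametric inequalities split into those involving only indices $\leq n$ (which are literally those for $d$), those of the form $\tilde\delta'_{i,n+1} \leq \max(\tilde\delta'_{j,n+1},\tilde\delta'_{ij})$, and those of the form $\tilde\delta'_{ij}\leq \max(\tilde\delta'_{j,n+1},\tilde\delta'_{i,n+1})$; the nearest-point inequalities similarly split. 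The key numerical observation is that with $d'_{i,n+1} = r+q+\epsilon$ and $\delta'_{i,n+1}=r+\epsilon$, every new pair $\{i,n+1\}$ attains its upper nearest-bound $d'_{i,n+1} = \delta'_{i,n+1} + q$ but has slack in the lower bound, and every new ultrametric inequality is tight exactly because $\delta'_{i,n+1}=r+\epsilon = \delta'_{j,n+1} > \delta_{ij}$ for all $i,j\le n$ (since $r$ is the root weight, $\delta_{ij}\le r$). So I would compute the $\argmax$ sets at $\tilde{\bm\delta}'$ for each of these new rows.

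\textbf{Second step: identify the SCC structure.} From the $\argmax$ computation I expect the new coordinates $\tilde\delta'_{1,n+1},\dots,\tilde\delta'_{n,n+1}$ to all lie in a single strongly connected component $N$ of $\mathcal{G}(\tilde{\bm\delta}',\cone')$: the inequalities $\tilde\delta'_{i,n+1}\le\max(\tilde\delta'_{j,n+1},\tilde\delta'_{ij})$ with the maximum attained at $\tilde\delta'_{j,n+1}$ give arcs among these coordinates in both directions, and the tightness of the upper nearest-bound links them to $\xi$. Moreover I expect $N$ to sit at the top of the poset: the hyperarcs $(\{\text{new}\},\{\text{new}\})$ coming from $\tilde\delta'_{ij}\le\max(\tilde\delta'_{j,n+1},\tilde\delta'_{i,n+1})$ make every old coordinate reach $N$, while nothing forces an arc out of $N$ back down into the old part except possibly through $\xi$ — and $\xi$ is reachable from (and reaches) the old greatest SCC precisely when $\delta$ is extreme. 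Thus the SCC poset for $\delta'$ is: the old poset, with $\xi$ and the new coordinates merged into (or placed above) the structure so that there is a greatest element if and only if the old poset had one. Combining this with Theorem \ref{thm:all13} in both directions gives the claimed equivalence $\delta'\in\mathcal{E}_{n+1}(d')\Leftrightarrow\delta\in\mathcal{E}_n(d)$. I should also separately verify that $\delta'$ is genuinely an $\ell^\infty$-nearest ultrametric to $d'$ with the same $q$: that $\delta'$ is an ultrametric is immediate from $\delta'_{i,n+1}$ being constant and $\ge$ all old entries, and that the nearest distance is still $q$ follows because the new pairs contribute exactly $|d'_{i,n+1}-\delta'_{i,n+1}| = q$ and $\delta$ already achieves distance $q$ on the old pairs, combined with Theorem \ref{thm:oneultra} applied to $d'$ (the minimum spanning tree of $d'$ attaches $n+1$ through a single heavy edge of weight $r+q+\epsilon$, so $q$ is unchanged).

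\textbf{Main obstacle.} The delicate point is the poset/merging claim: I have to rule out that the new component $N$ creates a \emph{second} maximal element incomparable to the old top, or conversely that it wrongly ``repairs'' a non-extreme $\delta$ by connecting two old incomparable maximal SCCs through the new coordinates. This requires checking that the only hyperarcs leaving $N$ point to coordinates already below $N$, i.e., that no new ultrametric or nearest inequality produces a hyperarc from a new coordinate (or $\xi$, in its role relative to $N$) into an old SCC in a way that wasn't already present in $\mathcal{G}(\delta,\cone)$. I expect this to come down to: (i) the arcs into old coordinates $\tilde\delta'_{ij}$ from $N$ exist only via the rows $\tilde\delta'_{ij}\le\max(\tilde\delta'_{j,n+1},\tilde\delta'_{i,n+1})$, which put $\tilde\delta'_{ij}$ \emph{below} $N$, consistent with $N$ being on top; and (ii) the role of $\xi$: in $\mathcal{G}(\delta,\cone)$, $\xi$ is tied by the tight upper nearest-bounds to whichever old pairs $\{i,j\}$ have $\delta_{ij}=d_{ij}+q$, and in $\mathcal{G}(\delta',\cone')$ it is additionally tied to all of $N$; since the old top SCC is reachable from $\xi$ iff $\delta$ is extreme (this is essentially what Theorem \ref{thm:all13} says for $\delta$), the same holds for $\delta'$ with $N$ playing the role of a component reachable from everything. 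Handling the non-binary / degenerate cases where several coordinates coincide (so the hypergraph is not an ordinary graph and $\argmax$ sets are larger) is the routine-but-fiddly part; I would organize the argument to depend only on the \emph{set} of tight inequalities, not on a particular resolution of the topology, exactly as in the $n=3$ proof above.
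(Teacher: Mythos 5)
Your overall strategy --- compare the tangent directed hypergraph of $\tilde{\bm\delta}'$ in the cone for $d'$ with that of $\tilde{\bm\delta}$ in the cone for $d$, and show that the extra hyperarcs cannot change whether the SCC poset has a greatest element --- is exactly the paper's approach. However, your computation of which new inequalities are tight is wrong in a way that inverts the structure and would prove the wrong statement. You claim that \emph{every} new ultrametric inequality is tight, and in particular that the rows $\tilde\delta'_{ij}\leq\max(\tilde\delta'_{j,n+1},\tilde\delta'_{i,n+1})$ with $i,j\leq n$ generate hyperarcs making ``every old coordinate reach $N$,'' so that $N$ sits at the top of the poset. But those rows are strict: the left side is $\delta_{ij}\leq r$ while the right side is $r+\epsilon$, so they contribute no hyperarcs whatsoever to the tangent hypergraph. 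Only the rows with a new coordinate on the left, $\tilde\delta'_{i,n+1}\leq\max(\tilde\delta'_{j,n+1},\tilde\delta'_{ij})$, are tight, and since $\delta_{ij}<r+\epsilon$ their right-hand $\argmax$ is $\{(j,n+1)\}$, so they produce arcs only among the $n$ new nodes. Likewise the new nodes do not form one SCC with $\xi$: the tight upper nearest-bound $\xi+d'_{i,n+1}\leq\tilde\delta'_{i,n+1}+q$ gives a one-way arc from $(i,n+1)$ to $\xi$, while the corresponding lower bound has slack $2q$. Had your picture been correct ($N$ greatest and reachable from everything), Theorem \ref{thm:all13} would certify $\delta'$ extreme for \emph{every} $\delta$, contradicting the ``only if'' direction of the lemma --- which is precisely the direction needed for the counterexample construction in Theorem \ref{thm:induc}.

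The correct picture, which is what the paper establishes, is the opposite of yours: the $n$ new nodes form a strongly connected component with \emph{no} incoming arcs from the old part, whose only outgoing arcs go to $\xi$. Hence this component can never be the greatest element; reachability among the original ${n\choose 2}+1$ nodes is untouched, since no path can detour through the new component when nothing enters it; and the new component reaches the old greatest SCC through $\xi$ whenever one exists. The equivalence then follows in both directions. Your ``main obstacle'' paragraph correctly identifies the danger (the new component either becoming a second maximal element or wrongly repairing a non-extreme $\delta$), but the resolution you sketch points the wrong way; the fix is the tightness computation above.
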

\begin{proof}
	First we perform the homogenization procedure on $\delta$ and $\delta'$ by letting $\xi=0$, getting $\tilde{\bm{\delta}}$ and $\tilde{\bm{\delta}}'$. Recall the exterior description of the $\ell^\infty$-nearest tropical cone \eref{extdesc:eq5}:
	\begin{equation}
	\begin{aligned}
	&\tilde{\delta}_{ij}\leq \max(\tilde{\delta}_{jk},\tilde{\delta}_{ik}),\\
	&\tilde{\delta}_{ij}-q \leq  \xi + d_{ij} \sepand \xi + d_{ij}  \leq \tilde{\delta}_{ij} + q.
	\end{aligned}\tag{\ref{extdesc:eq5}}
	\end{equation}
	Consider the difference between tangent directed hypergraphs built from $\tilde{\bm{\delta}}$ on $d$ and $\tilde{\bm{\delta}}'$ on $d'$. The hypergraph of $\tilde{\bm{\delta}}$ will have ${n \choose 2} +1$ nodes and that of $\tilde{\bm{\delta}}'$ will have ${n+1 \choose 2} +1$ nodes, where the $n$ extra nodes correspond to the $n$ variables $\tilde{\delta}_{i, n+1}$. An schematic diagram is shown in \fref{fig:proofn4:scheme}.
\begin{figure}[!htbp]
	\centering
	\includegraphics{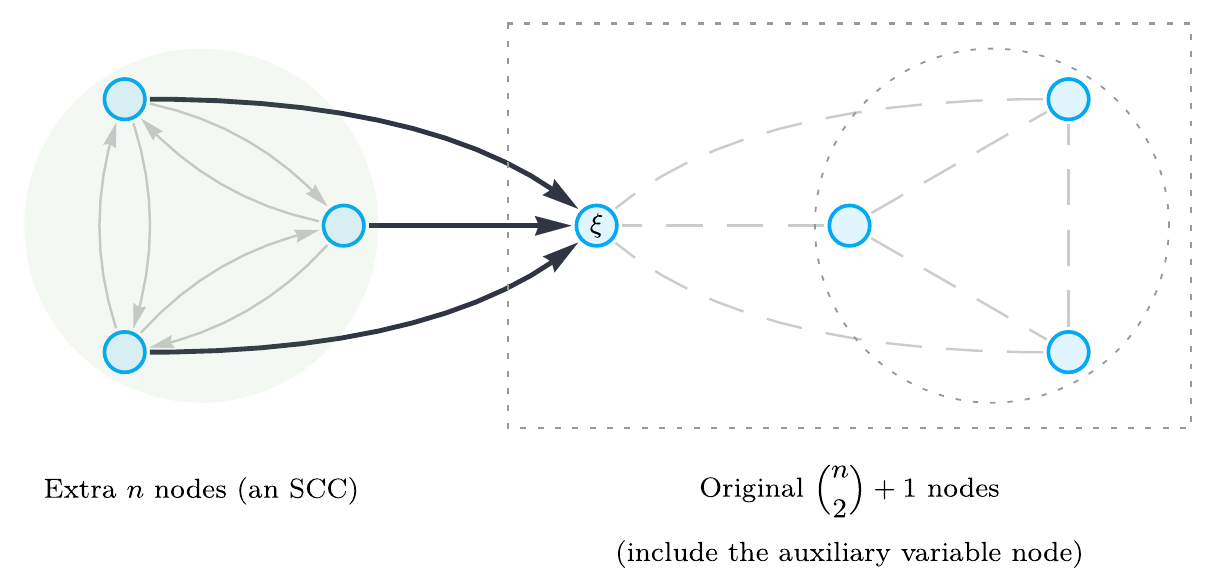}
	\caption{Schematic tangent hypergraph of size $n+1$ ultrametric $\tilde{\bm{\delta}}$}
	\label{fig:proofn4:scheme}
\end{figure}

 We focus on the extra hyperarcs between these $n$ extra nodes and the original ${n \choose 2} +1$ nodes. The first set of inequalities in \eref{extdesc:eq5} will not make any difference since any combination of $(i,j,n+1)$ with $i,j\leq n$ will only generate hyperarcs among the $n$ extra nodes themselves but not between the two groups, due to the fact that $\tilde{\delta}'_{i,n+1} = r+\epsilon > \tilde{\delta}'_{i,j}$ with $\forall i,j\leq n$. On the other hand, the second set of inequalities will contribute extra hyperarcs. Consider the inequalities $\xi + d_{ij} \leq \tilde{\delta}_{ij} + q$. Since ${\delta}'_{i,n+1} + q= r+\epsilon +q =d'_{i,n+1}$, extra hyperarcs from each of the $n$ extra nodes to the first node representing the homogenization variable $\xi$ will be generated. So the overall difference between these two hypergraphs is that an extra SCC consisting of $n$ extra nodes will be pointing towards the first node. Now it is obvious to see that this will not change the fact whether or not the hypergraph has a greatest SCC since this newly added SCC cannot be the greatest one. Therefore by Theorem \ref{thm:all13}, the extremality of ${{\delta}}'$ given $d'$ is the same as the extremality of ${{\delta}}$ given $d$.  $\qed$
\end{proof}
Lemma \ref{lma:addnode} tells us that once the ultrametric is not extreme, there will always be a way to construct another larger non-extreme ultrametric upon it.  So we have the following theorem:
\begin{theorem}\label{thm:induc}
	Let $d^{(n)}$ be a dissimilarity map on $n \geq 4$ elements. If $d^{(n)}$ is a counterexample to Bernstein's characterization, then there exists $d^{(n+1)}$ on $n+1$ elements that is also a counterexample.
\end{theorem}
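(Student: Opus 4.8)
\noindent\emph{Proof plan.}~The idea is to show that the construction of Lemma~\ref{lma:addnode} not only preserves extremality but also preserves membership in Bernstein's generated set, so that a counterexample is carried to a counterexample. Saying that $d^{(n)}$ is a counterexample means exactly $\mathcal{B}_n(d^{(n)})\setminus\mathcal{E}_n(d^{(n)})\neq\varnothing$; fix $\delta$ in this set, write $q$ for the nearest distance of $d^{(n)}$, $\delta^{*}$ for the Chepoi--Fichet ultrametric of Theorem~\ref{thm:oneultra} with root weight $D=\max_{i,j\le n}\delta^{*}_{ij}$, and $r$ for the root weight of $\delta$. Since $\delta\in\mathcal{B}_n(d^{(n)})$ it is obtained from $\delta^{*}$ by finitely many ``slide all the way down'' moves, each of which only lowers node weights, so $r\le D$. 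Now apply Lemma~\ref{lma:addnode} with the free parameter chosen large, $\epsilon>D-r$ (hence automatically $\epsilon>0$), producing $d^{(n+1)}$ and the extended ultrametric $\delta^{(n+1)}$. Lemma~\ref{lma:addnode} already gives $\delta^{(n+1)}\notin\mathcal{E}_{n+1}(d^{(n+1)})$, so the whole theorem reduces to proving $\delta^{(n+1)}\in\mathcal{B}_{n+1}(d^{(n+1)})$.

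First I would record what the large-$\epsilon$ choice buys. Any path from a leaf $i\le n$ to the new leaf $n+1$ contains an edge of weight $r+q+\epsilon$, and $r+q+\epsilon>D\ge d^{*}_{ij}$ for all $i,j\le n$; hence the minimal-bottleneck distances among $1,\dots,n$ are not disturbed by the new vertex, the nearest distance of $d^{(n+1)}$ is again $q$, and the Chepoi--Fichet ultrametric $\delta^{(n+1),*}$ of $d^{(n+1)}$ equals $\delta^{*}$ on $\{1,\dots,n\}$ together with one extra interior node $\rho^{*}$ of weight $r+2q+\epsilon$ placed strictly above the old root and carrying the pendant leaf $n+1$.

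It then remains to verify the two defining features of $\mathcal{B}_{n+1}(d^{(n+1)})$ for $\delta^{(n+1)}$. Take the resolution $T'$ of $\delta^{(n+1)}$ consisting of $\rho'$ with children $n+1$ and the chosen resolution of $\delta$. The node $\rho'$ (weight $r+\epsilon$) is immobile, since lowering it would push $|\delta^{(n+1)}_{i,n+1}-d^{(n+1)}_{i,n+1}|$ above $q$; and every interior node at or below the old root affects only coordinates $\delta^{(n+1)}_{ij}$ with $i,j\le n$ and retains its descendant set, so its mobility and its ``all the way down'' stopping weight are dictated by the same inequalities of \eref{extdesc:eq5} as in $\delta$. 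Hence $\delta^{(n+1)}$ has exactly the mobile interior nodes of $\delta$ --- at most one. For reachability, start from $\delta^{(n+1),*}$ and slide $\rho^{*}$ all the way down; it stops at $\max(D,r+\epsilon)=r+\epsilon$, which is its position in $\delta^{(n+1)}$, yielding $\delta^{*}$ extended by a pendant leaf at height $r+\epsilon$. Then replay the witnessing sequence $\delta^{*}=u_{0}\to\cdots\to u_{m}=\delta$ on the $\{1,\dots,n\}$-part, sliding the same node at each stage: by the previous remark each move is legal and lands in the same place, and $\rho'$ stays strictly above the old root throughout because $r+\epsilon>D$ dominates every intermediate old-root weight. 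The endpoint is $\delta$ extended by a pendant leaf at height $r+\epsilon$, i.e. $\delta^{(n+1)}$. So $\delta^{(n+1)}\in\bigcup_i S_i$ with at most one mobile node, i.e. $\delta^{(n+1)}\in\mathcal{B}_{n+1}(d^{(n+1)})\setminus\mathcal{E}_{n+1}(d^{(n+1)})$, which is the assertion of the theorem.

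The main obstacle I anticipate lies in making the second and third paragraphs rigorous: one must confirm that the bound $\epsilon>D-r$ genuinely keeps $n+1$ ``at the top'', so that no minimal-bottleneck --- hence no Chepoi--Fichet --- distance among the original leaves changes, and so that $\rho'$ never descends to meet the old root during the replay; these are the only places where the size estimate on $\epsilon$ is actually invoked. The mobility bookkeeping for nodes below the old root, and the final appeal to Lemma~\ref{lma:addnode}, are then routine.
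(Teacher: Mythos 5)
Your proposal is correct and takes essentially the same route as the paper's proof: invoke Lemma~\ref{lma:addnode} for non-extremality, observe that the Chepoi--Fichet ultrametric of $d^{(n+1)}$ is $\delta^{*}$ with a new root at $r+2q+\epsilon$ carrying the pendant leaf $n+1$, slide that new root all the way down to $r+\epsilon$ where it becomes immobile, and then replay the original sliding sequence on the subtree to reach $\delta^{(n+1)}$ with at most one mobile node. Your explicit requirement $\epsilon>D-r$ (so that the new root stops at $r+\epsilon$ strictly above the old root rather than colliding with it at $D$) is a worthwhile sharpening of a condition the paper leaves implicit, and is harmless since $\epsilon$ is a free parameter in the construction.
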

\begin{proof}
	Suppose $\delta^{(n)} \in \tmr^{{n\choose 2}}$ is an ultrametric produced by Bernstein's Theorem \ref{thm:charac} but not extreme. We construct $d^{(n+1)}$ and $\delta^{(n+1)}$ in the same way as in Lemma \ref{lma:addnode}, based on $d^{(n)}$ and $\delta^{(n)}$. Since $\delta^{(n)}$ is not extreme, according to Lemma \ref{lma:addnode}, $\delta^{(n+1)}$ is also not an extreme ray. Now we only have to show $\delta^{(n+1)}$ satisfies Bernstein's characterization, i.e., can be generated from the sliding procedure and has at most one mobile node.
	
	First, for $d^{(n+1)}$, the one $\ell^\infty$-nearest ultrametric given by Theorem \ref{thm:oneultra} will only differ from that of $d^{(n)}$ when involving the $(n+1)$-th element. This is because by construction, the distance from any element to $(n+1)$-th element is the largest. We have $\delta^{*(n+1)}_{ij} = \delta^{*(n)}_{ij}$ if $i,j\leq n$ and $\delta^{*(n+1)}_{i,n+1} = r+ 2q+\epsilon$. The tree constructed from $\delta^{*(n+1)}$ will have a root node with weight $r+2q+\epsilon$, whose two children are the $(n+1)$-th element and the entire tree constructed from $\delta^{*(n)}$. Note that the root node is mobile and we can slide the root node all the way done till $r+\epsilon$. Now the root node is immobile and has a greater weight than the root of $\delta^{*(n)}$. So further sliding procedures will only take place on the $\delta^{*(n)}$ sub-tree. This means that all possible trees generated by sliding $\delta^{*(n)}$ will also appear, as a child of the root node whose weight is $r+\epsilon$. Moreover, they should all have at most one mobile node since the root itself is immobile. Because the above $\delta^{(n)}$ is generated from $\delta^{*(n)}$, $\delta^{(n+1)}$ defined as in Lemma \ref{lma:addnode} should appear in the sliding procedure and has at most one mobile node, hence satisfying Bernstein's characterization. $\qed$
\end{proof}
Now we can complete the proof of Theorem \ref{maintheorem}.
\begin{proof}
	Given the $n=4$ counterexample \eref{cntex2:eq1}, we can inductively construct any size $n(\geq 4)$ counterexample according to Theorem \ref{thm:induc}. $\qed$
\end{proof}

As an example, here we show how to construct an $n=5$ counterexample based on the $n=4$ counterexample in \eref{cntex2:eq1}. We will focus on the left non-extreme ultrametric in \fref{fig:cntex2:more}. Since the root weight is $r=10$ and $q=4$. We choose $\epsilon=1$ then we can write the following $n=5$ dissimilarity map
\begin{equation}\label{constr:eq1}
d = \left(
\begin{array}{ccccc}
0 & 6 & 6 & 5 & 15\\
6 & 0 & 14 & 12 & 15 \\
6 & 14 & 0 & 9 & 15 \\
5 & 12 & 9 & 0 & 15 \\
15 & 15 & 15 & 15 & 0\\
\end{array}
\right)
\end{equation}
and the one $\ell^\infty$-nearest ultrametric given by Theorem \ref{thm:oneultra} is
\begin{equation}\label{constr:eq2}
\delta^* = \left(
\begin{array}{ccccc}
0 & 10 & 10 & 9 & 19\\
10 & 0 & 10 & 10 & 19 \\
10 & 10 & 0 & 10 & 19 \\
9 & 10 & 10 & 0 & 19\\
19 & 19 & 19 & 19 & 0\\
\end{array}
\right).
\end{equation}
It is straightforward to check that the following ultrametric shown in \fref{fig:constr} with root weight $q+\epsilon=11$ satisfies Bernstein's characterization but is not an extreme ray.
\begin{figure}[!htbp]
	\centering
	\includegraphics{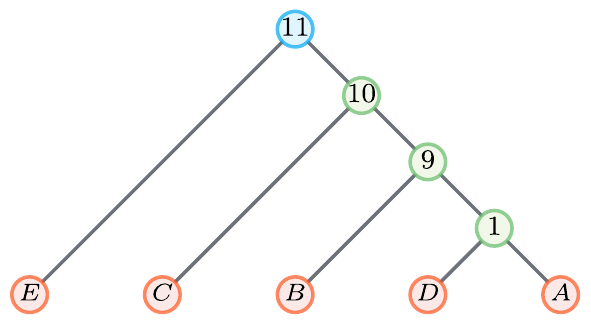}
	\caption{Non-extreme ultrametrics constructed from the left in \fref{fig:cntex2:more} satisfies the characterization}
	\label{fig:constr}
\end{figure}

\subsection{A counterexample based on the biological data in \cite{bernstein2017}}
The counterexample given above is far from unique. To show this, we examine the $n=8$ example given in \cite{bernstein2017}.  The given dissimilarity map is
\begin{equation}\label{cntex:eq1}
d = \left(
\begin{array}{cccccccc}
 0 & 32 & 48 & 51 & 50 & 48 & 98 & 148 \\
 32 & 0 & 26 & 34 & 29 & 33 & 84 & 136 \\
 48 & 26 & 0 & 42 & 44 & 44 & 92 & 152 \\
 51 & 34 & 42 & 0 & 44 & 38 & 86 & 142 \\
 50 & 29 & 44 & 44 & 0 & 24 & 89 & 142 \\
 48 & 33 & 44 & 38 & 24 & 0 & 90 & 142 \\
 98 & 84 & 92 & 86 & 89 & 90 & 0 & 148 \\
 148 & 136 & 152 & 142 & 142 & 142 & 148 & 0 \\
\end{array}
\right).
\end{equation}
This dataset represents immunological distances between dot(D), bear(B), raccoon(R), weasel(W), seal(S), sea lion(SL), cat(C), monkey(M). The one $\ell^\infty$-nearest ultrametric given by Theorem \ref{thm:oneultra} is
\begin{equation}\label{cntex:eq2}
\delta^* = \left(
\begin{array}{cccccccc}
 0 & 41 & 41 & 43 & 41 & 41 & 93 & 145 \\
 41 & 0 & 35 & 43 & 38 & 38 & 93 & 145 \\
 41 & 35 & 0 & 43 & 38 & 38 & 93 & 145 \\
 43 & 43 & 43 & 0 & 43 & 43 & 93 & 145 \\
 41 & 38 & 38 & 43 & 0 & 33 & 93 & 145 \\
 41 & 38 & 38 & 43 & 33 & 0 & 93 & 145 \\
 93 & 93 & 93 & 93 & 93 & 93 & 0 & 145 \\
 145 & 145 & 145 & 145 & 145 & 145 & 145 & 0 \\
\end{array}
\right).
\end{equation}
This ultrametric corresponds to the tree in \fref{fig:cntex:oneultra}. The $\ell^\infty$-nearest distance $q = 9$.
\begin{figure}[!htbp]
	\centering
	\includegraphics{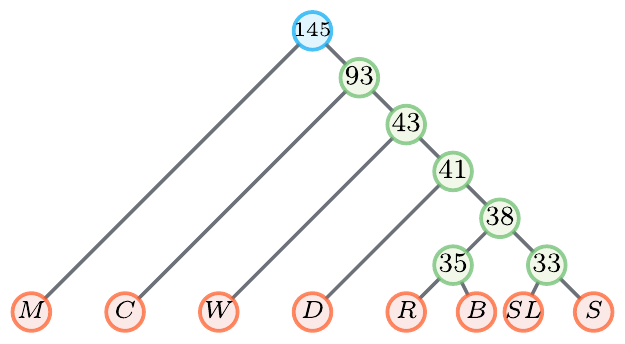}
	\caption{One ultrametric returned by Theorem \ref{thm:oneultra}}
	\label{fig:cntex:oneultra}
\end{figure}
We can then construct the exterior description, which will be matrices of size $224\times 29$. By the extreme computing algorithm, 16 extreme rays are found as following, where each column represents an extreme ray.
\begin{equation}\label{cntex:eq3}
\left(
\begin{array}{cccccccccccccccc}
 41 & 41 & 41 & 41 & 41 & 41 & 41 & 41 & 41 & 41 & 41 & 41 & 41 & 41 & 41 & 41 \\
 41 & 41 & 41 & 41 & 41 & 41 & 41 & 41 & 41 & 41 & 41 & 41 & 41 & 41 & 41 & 41 \\
 42 & 42 & 42 & 43 & 42 & 42 & 42 & 43 & 42 & 42 & 42 & 43 & 42 & 42 & 42 & 42 \\
 41 & 41 & 41 & 41 & 41 & 41 & 41 & 41 & 41 & 41 & 41 & 41 & 41 & 41 & 41 & 41 \\
 41 & 41 & 41 & 41 & 41 & 41 & 41 & 41 & 41 & 41 & 41 & 41 & 41 & 41 & 41 & 41 \\
 89 & 89 & 89 & 89 & 93 & 89 & 89 & 89 & 93 & 89 & 89 & 89 & 93 & 89 & 89 & 89 \\
 143 & 143 & 143 & 143 & 143 & 143 & 145 & 143 & 143 & 143 & 145 & 143 & 143 & 143 & 143 & 145 \\
 35 & 35 & 17 & 35 & 35 & 35 & 35 & 35 & 35 & 35 & 35 & 17 & 17 & 17 & 17 & 17 \\
 42 & 42 & 42 & 43 & 42 & 42 & 42 & 43 & 42 & 42 & 42 & 43 & 42 & 42 & 42 & 42 \\
 24 & 20 & 35 & 24 & 24 & 33 & 24 & 20 & 20 & 20 & 20 & 35 & 35 & 38 & 35 & 35 \\
 24 & 24 & 35 & 24 & 24 & 24 & 24 & 24 & 24 & 33 & 24 & 35 & 35 & 38 & 35 & 35 \\
 89 & 89 & 89 & 89 & 93 & 89 & 89 & 89 & 93 & 89 & 89 & 89 & 93 & 89 & 89 & 89 \\
 143 & 143 & 143 & 143 & 143 & 143 & 145 & 143 & 143 & 143 & 145 & 143 & 143 & 143 & 143 & 145 \\
 42 & 42 & 42 & 43 & 42 & 42 & 42 & 43 & 42 & 42 & 42 & 43 & 42 & 42 & 42 & 42 \\
 35 & 35 & 35 & 35 & 35 & 35 & 35 & 35 & 35 & 35 & 35 & 35 & 35 & 38 & 35 & 35 \\
 35 & 35 & 35 & 35 & 35 & 35 & 35 & 35 & 35 & 35 & 35 & 35 & 35 & 38 & 35 & 35 \\
 89 & 89 & 89 & 89 & 93 & 89 & 89 & 89 & 93 & 89 & 89 & 89 & 93 & 89 & 89 & 89 \\
 143 & 143 & 143 & 143 & 143 & 143 & 145 & 143 & 143 & 143 & 145 & 143 & 143 & 143 & 143 & 145 \\
 42 & 42 & 42 & 43 & 42 & 42 & 42 & 43 & 42 & 42 & 42 & 43 & 42 & 42 & 42 & 42 \\
 42 & 42 & 42 & 43 & 42 & 42 & 42 & 43 & 42 & 42 & 42 & 43 & 42 & 42 & 42 & 42 \\
 89 & 89 & 89 & 89 & 93 & 89 & 89 & 89 & 93 & 89 & 89 & 89 & 93 & 89 & 89 & 89 \\
 143 & 143 & 143 & 143 & 143 & 143 & 145 & 143 & 143 & 143 & 145 & 143 & 143 & 143 & 143 & 145 \\
 15 & 24 & 15 & 15 & 15 & 33 & 15 & 24 & 24 & 33 & 24 & 15 & 15 & 15 & 33 & 15 \\
 89 & 89 & 89 & 89 & 93 & 89 & 89 & 89 & 93 & 89 & 89 & 89 & 93 & 89 & 89 & 89 \\
 143 & 143 & 143 & 143 & 143 & 143 & 145 & 143 & 143 & 143 & 145 & 143 & 143 & 143 & 143 & 145 \\
 89 & 89 & 89 & 89 & 93 & 89 & 89 & 89 & 93 & 89 & 89 & 89 & 93 & 89 & 89 & 89 \\
 143 & 143 & 143 & 143 & 143 & 143 & 145 & 143 & 143 & 143 & 145 & 143 & 143 & 143 & 143 & 145 \\
 143 & 143 & 143 & 143 & 143 & 143 & 145 & 143 & 143 & 143 & 145 & 143 & 143 & 143 & 143 & 145 \\
\end{array}
\right)
\end{equation}
And the corresponding trees are shown in \fref{fig:cntex:extremes}. The indices under the trees indicate the order that they are arranged in \cite{bernstein2017}. Note that in (f), the node with weight 89 has weight 93 in \cite{bernstein2017}, which might be a typo in the original paper. There are 4 more non-extreme ultrametrics that fail the characterization, shown in \fref{fig:cntex:more}.
\begin{figure}[!htbp]
	\centering
	\begin{subfigure}[b]{0.48\textwidth}
        \centering
		\includegraphics{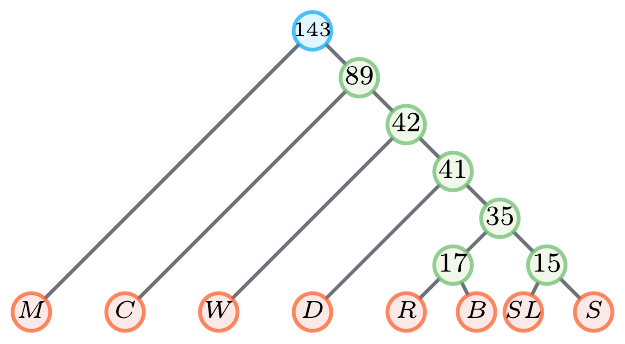}
		\caption{No. 1}
	\end{subfigure}
    \hfill
    \begin{subfigure}[b]{0.48\textwidth}
        \centering
		\includegraphics{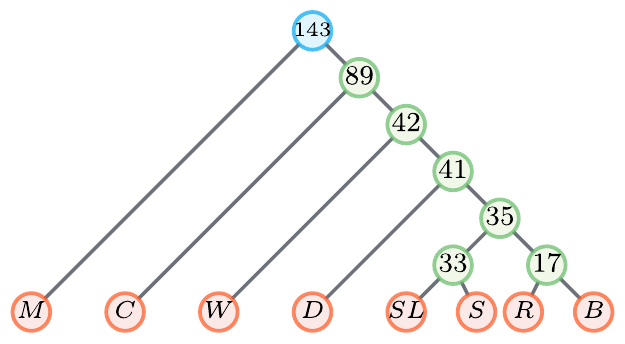}
		\caption{No. 3}
	\end{subfigure}
\\
    \begin{subfigure}[b]{0.48\textwidth}
        \centering
		\includegraphics{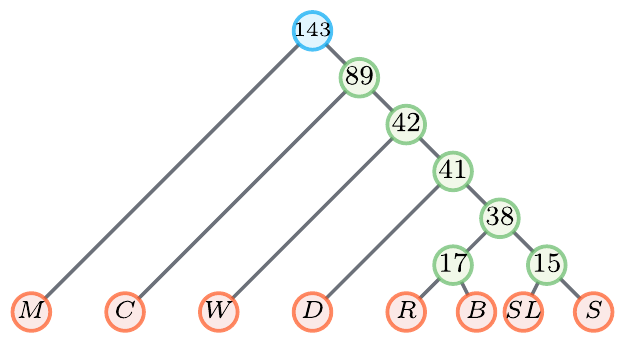}
		\caption{No. 4}
	\end{subfigure}
    \hfill
	\begin{subfigure}[b]{0.48\textwidth}
        \centering
		\includegraphics{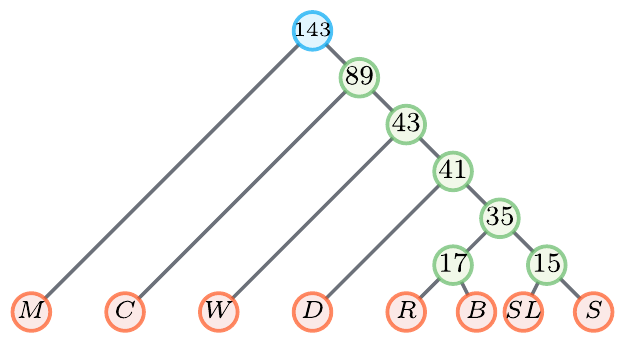}
		\caption{No. 5}
	\end{subfigure}
	\\
    \begin{subfigure}[b]{0.48\textwidth}
        \centering
		\includegraphics{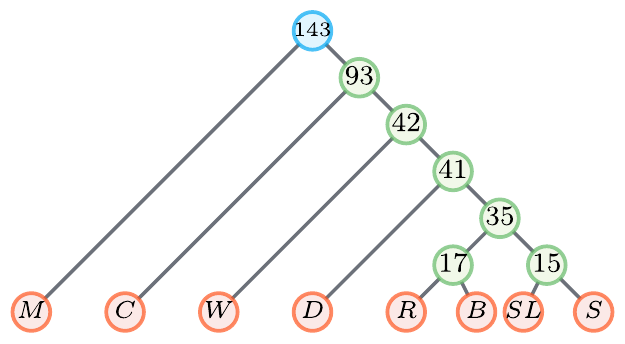}
		\caption{No. 6}
	\end{subfigure}
	\hfill
    \begin{subfigure}[b]{0.48\textwidth}
        \centering
		\includegraphics{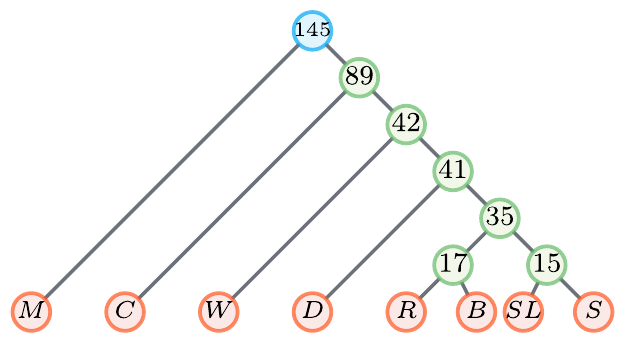}
		\caption{No. 7 (*)}
	\end{subfigure}
    \\
	\begin{subfigure}[b]{0.48\textwidth}
        \centering
		\includegraphics{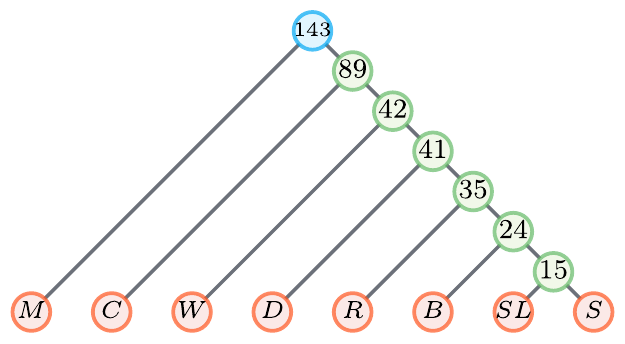}
		\caption{No. 8}
	\end{subfigure}
	\hfill
    \begin{subfigure}[b]{0.48\textwidth}
        \centering
		\includegraphics{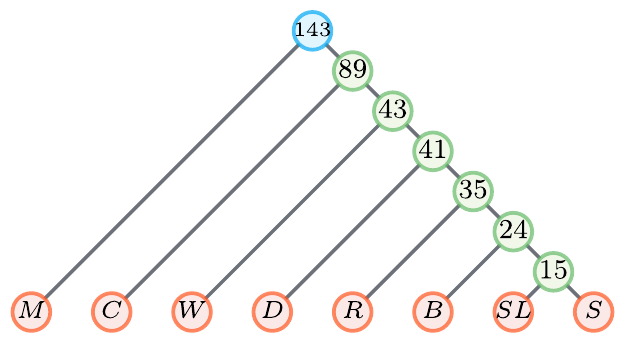}
		\caption{No. 11}
	\end{subfigure}
    \\
	\begin{subfigure}[b]{0.48\textwidth}
        \centering
		\includegraphics{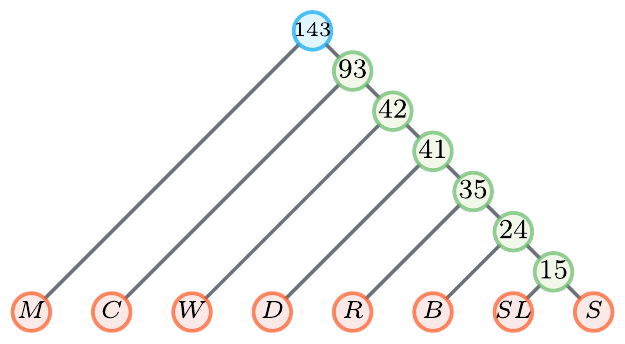}
		\caption{No. 12}
	\end{subfigure}
	\hfill
	\begin{subfigure}[b]{0.48\textwidth}
        \centering
		\includegraphics{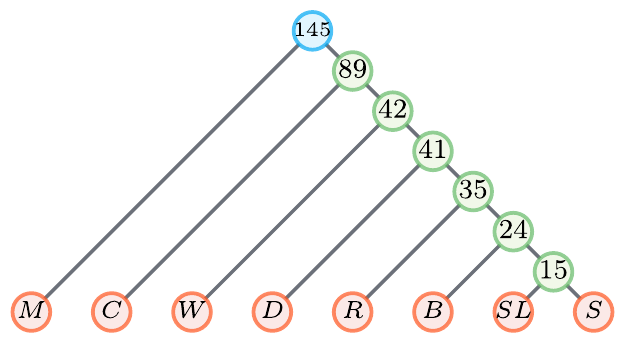}
		\caption{No. 13}
	\end{subfigure}
        \caption{Extreme rays $\ell^\infty$-nearest to the dissimilarity map given by \eref{cntex:eq1} (continue next page)}
	\label{fig:cntex:extremes}
\end{figure}
\begin{figure}[!htbp]\ContinuedFloat
	\centering
	\begin{subfigure}[b]{0.48\textwidth}
        \centering
		\includegraphics{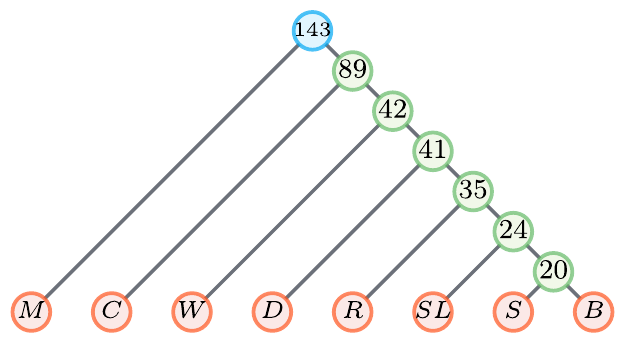}
		\caption{No. 14}
	\end{subfigure}
    \hfill
	\begin{subfigure}[b]{0.48\textwidth}
        \centering
		\includegraphics{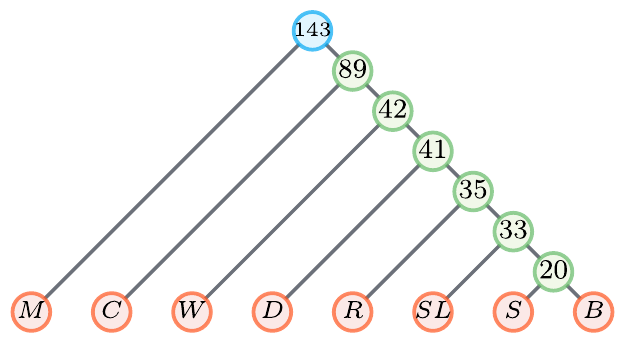}
		\caption{No. 15}
	\end{subfigure}
    \\
	\begin{subfigure}[b]{0.48\textwidth}
        \centering
		\includegraphics{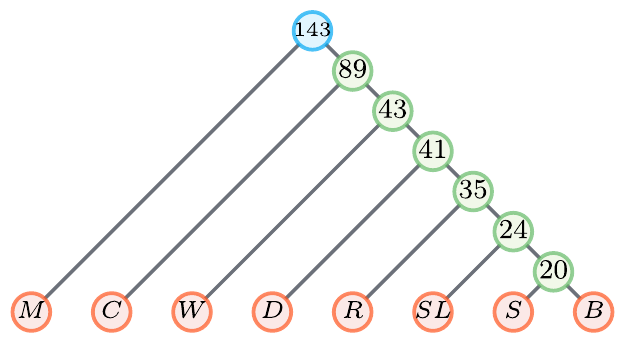}
		\caption{No. 17}
	\end{subfigure}
	\hfill
	\begin{subfigure}[b]{0.48\textwidth}
        \centering
		\includegraphics{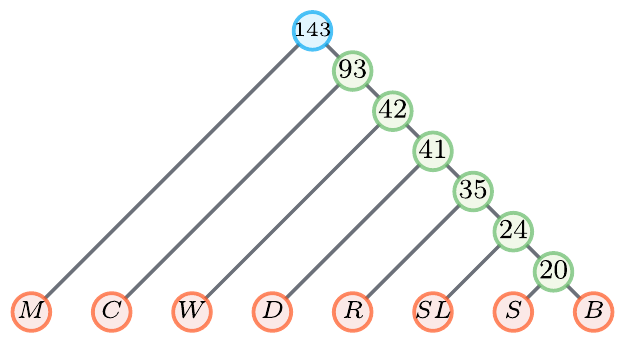}
		\caption{No. 18}
	\end{subfigure}
	\\
	\begin{subfigure}[b]{0.48\textwidth}
        \centering
		\includegraphics{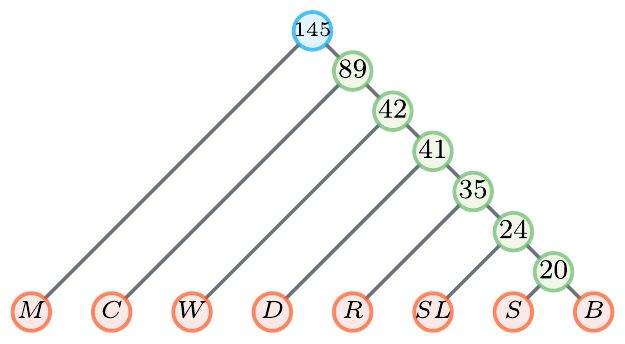}
		\caption{No. 19}
	\end{subfigure}
	\hfill
    \begin{subfigure}[b]{0.48\textwidth}
        \centering
		\includegraphics{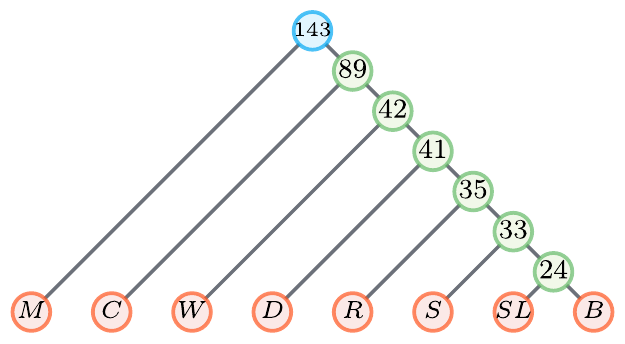}
		\caption{No. 20}
	\end{subfigure}
	\caption{Extreme rays $\ell^\infty$-nearest to the dissimilarity map given by \eref{cntex:eq1}}
\end{figure}

\begin{figure}[!htbp]
	\centering
	\begin{subfigure}[b]{0.48\textwidth}
        \centering
		\includegraphics{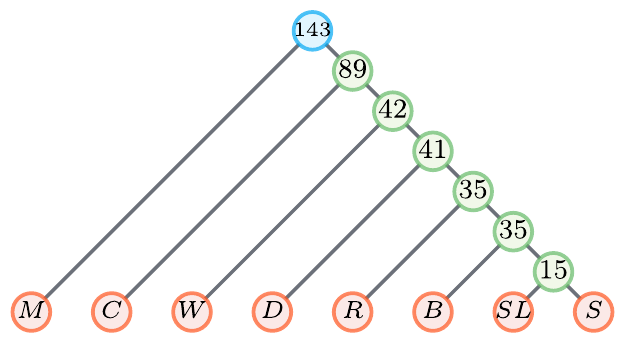}
		\caption{No. 2}
	\end{subfigure}
	\hfill
    \begin{subfigure}[b]{0.48\textwidth}
        \centering
		\includegraphics{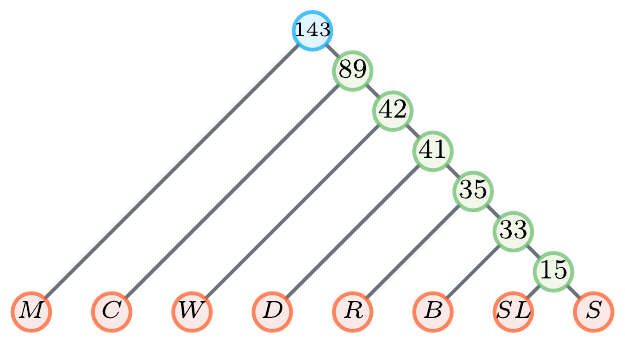}
		\caption{No. 9}
	\end{subfigure}
    \\
    \begin{subfigure}[b]{0.48\textwidth}
        \centering
		\includegraphics{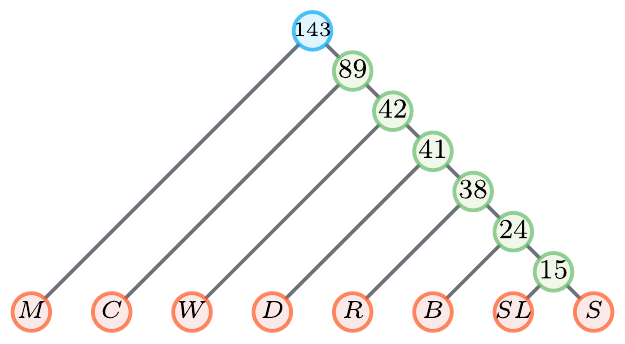}
		\caption{No. 10}
	\end{subfigure}
	\hfill
    \begin{subfigure}[b]{0.48\textwidth}
        \centering
		\includegraphics{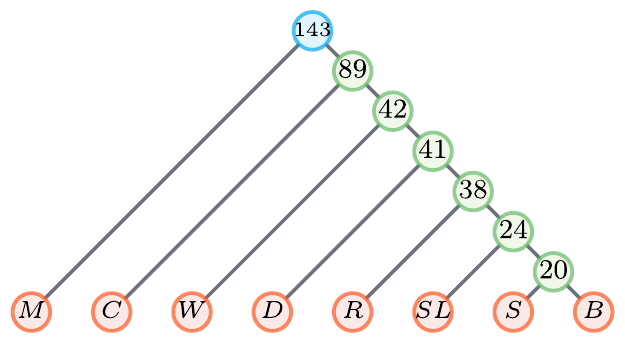}
		\caption{No. 16}
	\end{subfigure}
	\caption{Non-extreme ultrametrics that satisfy the characterization}
	\label{fig:cntex:more}
\end{figure}

\section{Summary}\label{sec:disc}
In this paper, we prove that Bernstein's characterization \cite{bernstein2017} for extreme rays of the tropical polytope formed by ultrametrics $\ell^\infty$-nearest to a given dissimilarity map is sufficient if and only if $n=3$. We first give the exterior description of this tropical polytope, which can be used not only for the purpose of this proof, but also to compute all extreme rays of the polytope by algorithm proposed in \cite{allamigeon2013computing}. The proof utilizes the tangent directed hypergraph technique\cite{allamigeon2014complexity} which determines the extremality of a vector by constructing the tangent directed hypergraph based on the exterior description of the tropical polytope. The $n=3$ sufficiency is proved directly by this technique and the $n\geq 4$ insufficiency is proved inductively.

There are two important open questions. First, is there a direct characterization on phylogenetic trees that distinguishes extreme and non-extreme ones? One possibility, for example, might be adding some additional constraints to Bernstein's characterization. Second, is there an efficient algorithm to generate all extreme rays of out particular polytope? The existing algorithm by Allamigeon et al. for general cases grows exponentially with $n$ since the rows and columns of the matrix are both increasing.

\section*{Acknowledgement}\label{sec:ack}
The author was supported by the Provost’s Graduate Excellence Fellowships at University of Texas at Austin. The author would like to thank Ngoc Mai Tran for the initiation of this project and her continuous guidance and insightful inputs. Also, the author would like to thank Xavier Allamigeon for his advice on the manuscript and constructive discussions, and Daniel Irving Bernstein for his detailed explanations and clarifications about his theorems.

\bibliographystyle{unsrt}
\bibliography{TM-Paper}

\end{document}